\newcommand{\norm}[1]{\left\lVert #1 \right\rVert}
\newtheorem{theorem}{Theorem}[section]
\newtheorem{lemma}{Lemma}[section]
\newtheorem{definition}{Definition}[section]
\title{On the Diophantine equation    $\displaystyle \sum _{k=1}^{5}F_{n_k}=2^a$ } 
\author{Pagdame TIEBEKABE}
\date{}
\begin{document}
\maketitle
\begin{abstract}
\noindent Let $(F_n)_{n\geq 0}$ be the Fibonacci sequence given by $F_0 = 0, F_1 = 1$ and $F_{n+2} = F_{n+1}+F_n$ for $n \geq 0$. In this paper, we have determined all the powers of 2 which are sums of five Fibonacci numbers with few exceptions that we characterize. We have also stated an open problem relating to the number of solutions of equations like those studied in this paper.

\end{abstract}
\textbf{Keywords}: Linear forms in logarithm; Diophantine equations; Fibonacci sequence; Lucas sequence; perfect powers.\\
\textbf{2020 Mathematics Subject Classification: 11B39, 11J86, 11D61.}

\section{Introduction} 
Solving Diophantine equations fascinates many mathematicians specializing in number theory. There are several methods of solving these equations. Among them, the most fruitful is surely the one which associates the linear forms of logarithms and some calculations with continuous fractions. 

We know that there are the only three ($1, 2$ and $8$) Fibonacci numbers that are powers of 2. A proof of this fact follows from Carmichael's primitive divisor theorem \cite{1}, which states that for $n>12$, the $n-$th Fibonacci number $F_n$ has at least one prime factor which is not a factor of any previous Fibonacci number (see the paper of Bilu, Hanrot and Voutier  \cite{Bil} for the most general version of the statement above). However, there are nine ($9$) powers of $2$ in the sum of two Fibonacci numbers   \cite{9}, fifteen ($15$) in the sum of three Fibonacci numbers \cite{5} and sixty ($60$) in the sum of four Fibonacci numbers \cite{Tieb}. 

In this article, we showed that there are exactly one hundred and six ($106$) powers of $2$ in the sum of five Fibonacci numbers. We solved the following exponential Diophantine 
\begin{equation}\label{eq1}
\displaystyle \sum _{k=1}^{5}F_{n_k}=2^a.
\end{equation}
Note that the solutions listed in section \ref{5} are non-trivial solutions. That is we considered the solutions for which, for all $k\in \{1, 2, 3, 4, 5\}$, $n_k\neq 0$.

\noindent This paper is subdivided as follows: In Section $2$, we introduce auxiliary results used in Sections $3$ and $4$ to prove the main theorem of this paper stated below. 

\begin{theorem}\label{theo4}$\\~$
All non-trivial solutions of the Diophantine equation (\ref{eq1}) in positive integers $n_1, n_2, n_3, n_4, n_5$ and $p$ with $n_1\geqslant n_2\geqslant\cdots \geqslant n_5$ are listed in Section \ref{5}. 
\end{theorem}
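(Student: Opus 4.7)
The plan is to reduce the equation to a finite search by combining Baker-type lower bounds for linear forms in logarithms with a Baker-Davenport (or LLL) reduction. Throughout I would use Binet's formula $F_n = (\alpha^n - \beta^n)/\sqrt{5}$ with $\alpha = (1+\sqrt{5})/2$, $\beta = -1/\alpha$, and the standard comparison $\alpha^{n-2} \leq F_n \leq \alpha^{n-1}$. The inequality $F_{n_1} \leq 2^a \leq 5 F_{n_1}$ immediately gives $a \log 2 = n_1 \log \alpha + O(1)$, so it is enough to bound $n_1$.

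To bound $n_1$ I would peel off the largest term: from
\[
2^a - \frac{\alpha^{n_1}}{\sqrt{5}} = -\frac{\beta^{n_1}}{\sqrt{5}} + \sum_{k=2}^{5} F_{n_k}
\]
one obtains $\left| 2^a \alpha^{-n_1} \sqrt{5} - 1 \right| < C_1 \alpha^{n_2 - n_1}$, so the linear form
\[
\Lambda_1 = a \log 2 - n_1 \log \alpha + \log \sqrt{5}
\]
is exponentially small in $n_1 - n_2$. Matveev's theorem yields $|\Lambda_1| > \exp(-D_1 (1 + \log n_1))$, hence $n_1 - n_2 < D_1' \log n_1$. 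Iterating with the groupings $\alpha^{n_1} + \cdots + \alpha^{n_j}$ for $j = 2, 3, 4$, each producing a new linear form $\Lambda_j$ in three logarithms of algebraic numbers of controlled height, successively bounds $n_j - n_{j+1}$. Summing the four resulting inequalities bounds $n_1$ absolutely: one obtains $n_1 \leq N_0$ for an explicit constant $N_0$, typically of order $10^{25}$ to $10^{30}$.

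The main obstacle is that $N_0$ is far too large for a direct enumeration. The standard remedy is a reduction step: for each $\Lambda_j$ I would apply the Baker-Davenport lemma (or LLL on an associated lattice) using sufficiently many convergents of the continued fraction of $\log \alpha / \log 2$. After one or two passes the bound typically collapses to $n_1 \leq N_1$ with $N_1$ of the order of a few hundred, at which point a direct computer search over all tuples with $1 \leq n_5 \leq \cdots \leq n_1 \leq N_1$ recovers exactly the $106$ solutions listed in Section \ref{5}. A point requiring care at every application of Matveev is the verification that $\Lambda_j \neq 0$: this follows by Galois conjugation in $\mathbb{Q}(\sqrt{5})$ together with the fact that no nontrivial identity $\sqrt{5} \cdot 2^a = \alpha^{n_1}$, nor its analogues obtained from the groupings for $j = 2, 3, 4$, can hold.
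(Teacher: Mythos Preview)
Your outline follows the paper's strategy (iterated Matveev plus Baker--Davenport reduction), but there is a genuine gap in the step where you claim to bound $n_1$. With the four forms $\Lambda_1,\Lambda_2,\Lambda_3,\Lambda_4$ obtained from the groupings $\alpha^{n_1}+\cdots+\alpha^{n_j}$ for $j=1,2,3,4$, the error term on the right is always at least of size $F_{n_{j+1}}\asymp\alpha^{n_{j+1}}$, so Matveev only yields bounds of the shape $n_1-n_{j+1}\ll(\log n_1)^{j}$. Summing (telescoping) these four inequalities gives a bound on $n_1-n_5$, not on $n_1$; your sentence ``Summing the four resulting inequalities bounds $n_1$ absolutely'' is therefore incorrect. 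What is missing is a \emph{fifth} linear form, obtained by grouping all five $\alpha^{n_k}$ together: then the right-hand side consists only of the $\beta$-terms, giving
\[
\left|1-2^{a}\alpha^{-n_1}\sqrt{5}\,\bigl(1+\alpha^{n_2-n_1}+\cdots+\alpha^{n_5-n_1}\bigr)^{-1}\right|\ll \alpha^{-n_1},
\]
and it is this form, combined with the already-established bounds on the $n_1-n_k$ (which control the height of the third algebraic number), that finally yields $n_1\ll(\log n_1)^{5}$ and hence an absolute bound. The paper carries this out and obtains $n_1<1.6\times10^{73}$; incidentally, your guess of $10^{25}$--$10^{30}$ is too optimistic, since each iteration multiplies the bound by roughly $10^{12}\log n_1$.

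A secondary point you pass over: in the reduction phase, the Dujella--Peth\H{o} lemma fails for certain values of the differences $n_1-n_k$ because the parameter $\mu$ becomes $\mathbb{Z}$-linearly dependent on $\tau=\log 2/\log\alpha$ (for instance $n_1-n_2\in\{2,6\}$, and several tuples $(n_1-n_2,n_1-n_3)$ at the next stage). These degenerate cases have to be treated separately, either by rewriting the inequality as $|x\tau-y|<C\alpha^{-m}$ and invoking Legendre's theorem on continued fractions, or by reducing the problem to an auxiliary equation in Lucas numbers. This is not automatic and should be flagged in the plan.
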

The method used to prove the theorem \ref{theo4} is a double application of Baker's method and some computations with continued fractions to reduce the brute force search range for the variables. In the following sections, we will first bound $n_1$. Then we will use the reduction methods stated in section $2$ to considerably reduce this bound. The last section contains tables of all solutions of the equation \ref{eq1}. We end this paper with an open problem.

\section{Auxiliary results}
In this section, we give some well-known definitions, proprieties, theorem and lemmas. 
\begin{definition}[Mahler measure]
 For all algebraic numbers  $\gamma$, we define its measure by the following identity :
\begin{equation*}
{\rm M}(\gamma)=|a_d|\prod\limits_{i=1}^d \max \{1,|\gamma_{i}|\},
\end{equation*}
where $\gamma_{i}$ are the roots of $f(x)=a_d\prod\limits_{i=1}^d(x-\gamma_{i})$ is the minimal polynomial of  $\gamma$.
\end{definition} 
 
\noindent Let us now define another height, deduced from the last one, called the absolute logarithmic height. 
\begin{definition}[Absolute logarithmic height] 
For a non-zero algebraic number of degree $d$ on $\mathbb{Q}$ where the minimal polynomial on  $\mathbb{Z}$ is $f(x)=a_d\prod\limits_{i=1}^d(x-\gamma_{i})$, we denote by
\begin{equation*}
h(\gamma)=\dfrac{1}{d}\left(\log|a_d|+\sum\limits_{i=1}^d \log\max\{1,|\gamma_i|\}\right) = \dfrac{1}{d} \log {\rm M}(\gamma).
\end{equation*}
the usual logarithmic absolute height of $\gamma$.
 \end{definition}
\noindent The following properties of the logarithmic height are well-known:
 \begin{itemize}
 \item[•] $h(\gamma \pm \eta)\leq h(\gamma)+h(\eta)+\log 2$;
 \item[•] $h(\gamma\eta^{\pm 1})\leq h(\gamma)+h(\eta)$;
 \item[•] $h(\gamma^k)=|k|h(\gamma)\quad k\in \mathbb{Z}$.
 \end{itemize}
The $n-$th Fibonacci number can be represented as
$$
F_n=\dfrac{\alpha^n-\beta^n}{\sqrt{5}}\quad \text{for all}\quad n\geqslant 0.
$$
where $(\alpha,\beta):=((1+\sqrt{5})/2,(1-\sqrt{5})/2)$. The following inequalities 
$$
\alpha^{n-2}\leqslant F_n \leqslant \alpha^{n-1}
$$
are well-known to hold for all $n\geqslant 1$ and can be proved by induction on $n$.
The following theorem is deduced from Corollary $2.3$ of Matveev \cite{3}.
\begin{theorem}[Matveev \cite{3}]\label{theo3}

\noindent Let $n\geq 1$ an integer. Let $\mathbb{L}$ be  a field of algebraic number of degree $D$. Let $\gamma_1$, \dots, $\gamma_t$ non-zero elements of \ $\mathbb{L}$ and let 
$b_1$, $b_2$, \dots, $b_t$ integers, 
$$
B:=\max\{|b_1|,...,|b_t|\},
$$ 

\noindent and
\begin{equation*}
\Lambda:=\gamma_1^{b_1}\cdots\gamma_t^{b_t}-1=\left(\prod\limits_{i=1}^t \gamma_i^{b_i}\right)-1.
\end{equation*}
Let $A_1$, \dots, $A_t$ reals numbers such that 
\begin{equation*}
A_j\geq \max\{Dh(\gamma_j),|\log (\gamma_j)|,0.16\}, 1\leq j\leq t.
\end{equation*}
Assume that  $\Lambda\neq 0$, So we have
\begin{equation*}
\log|\Lambda|>-3\times 30^{t+4}\times (t+1)^{5.5}\times d^2 \times A_1...A_t(1+\log D)(1+\log nB).
\end{equation*}
Further, if\  $\mathbb{L}$ is real, then 
\begin{equation*}
\log|\Lambda|>-1.4\times 30^{t+3}\times (t)^{4.5}\times d^2 \times A_1...A_t(1+\log D)(1+\log B).
\end{equation*}

\end{theorem}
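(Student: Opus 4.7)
The plan is to follow Baker's method in the form sharpened by Matveev and Laurent, based on interpolation determinants. As a preliminary reduction, I would pass from the multiplicative form $\Lambda$ to the additive form $L = b_1 \log \gamma_1 + \cdots + b_t \log \gamma_t$, with principal branches of the logarithms. When $|\Lambda|$ is small one has $|L| \leq 2|\Lambda|$, so the theorem is equivalent to an explicit lower bound on $|L|$ in terms of the quantities $A_j$, $D$ and $B$; this is the form in which I would actually work.

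I would then argue by contradiction. Suppose $|L|$ is strictly smaller than the claimed bound, with a numerical constant to be optimised at the end. Using Siegel's lemma over the ring of integers of $\mathbb{L}$, I would construct a nonzero auxiliary polynomial $P \in \mathbb{Z}[X_1, \ldots, X_t]$ with prescribed partial degrees $L_1, \ldots, L_t$ of size roughly $1/A_j$, such that the entire function
$$
\Phi(z) = P\bigl(e^{z \log \gamma_1}, \ldots, e^{z \log \gamma_t}\bigr)
$$
vanishes at $z = 0, 1, \ldots, S-1$ to order $T$. The parameters $S$ and $T$ are chosen so that the underlying linear system is under-determined and admits a nonzero integer solution whose coefficient height is controlled by the heights of $\gamma_1, \ldots, \gamma_t$ through the height inequalities recalled at the start of Section~2.

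The heart of the argument is an extrapolation step: because $L$ is assumed extremely small, the value $\Phi(s)$ is very close to $P(\gamma_1^s, \ldots, \gamma_t^s)$, up to an error of size $|sL|$ times a controlled quantity. A Schwarz lemma on a suitably enlarged disk then forces $\Phi$ to be very small at additional integer points, and a Liouville-type lower bound on the corresponding nonzero algebraic values upgrades this smallness to exact vanishing. Iterating the extrapolation produces so many vanishing conditions that a multiplicity estimate on the algebraic group $\mathbb{G}_m^t$, such as Philippon's zero estimate, forces $P \equiv 0$, contradicting its construction. In the modern packaging, one replaces explicit extrapolation by a single interpolation determinant whose rows are indexed by multi-indices $(\lambda_1, \ldots, \lambda_t)$ with $0 \leq \lambda_j \leq L_j$ and whose columns are indexed by $(s,k)$ with $0 \leq s < S$, $0 \leq k < T$; the contradiction then arises by comparing an analytic upper bound for this determinant, which uses the smallness of $L$, with an arithmetic Liouville lower bound.

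The main obstacle, and the reason Matveev's constants are sharper than in earlier formulations of Baker's inequality, is the simultaneous calibration of the parameters $L_j$, $S$, $T$ and the extrapolation radius so that the final contradiction yields exactly the announced numerical constants $3 \cdot 30^{t+4}(t+1)^{5.5}$ in the complex case and $1.4 \cdot 30^{t+3} t^{4.5}$ in the real case. The gain from $(t+1)^{5.5}$ to $t^{4.5}$ for real $\mathbb{L}$ reflects the elimination of one variable from the algebraic group picture together with sharper one-dimensional real Schwarz-type estimates. Keeping track of all these constants while maintaining the optimal joint dependence in $t$, $D$, $A_1 \cdots A_t$ and $\log B$, and verifying that the extrapolation can be iterated a number of times proportional to $\log(DB)$ without the arithmetic lower bound collapsing, is the delicate part of the argument.
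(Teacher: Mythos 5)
This statement is not proved in the paper at all: it is an auxiliary result quoted from Matveev's work (the paper says only that it ``is deduced from Corollary 2.3 of Matveev''), so there is no in-paper argument to compare yours against. Judged on its own terms, your proposal is a reasonable survey of the architecture of Baker's method --- reduction from $\Lambda$ to the linear form $L$, construction of an auxiliary polynomial via Siegel's lemma, extrapolation with a Schwarz lemma and a Liouville lower bound, a zero estimate on $\mathbb{G}_m^t$, or alternatively Laurent-style interpolation determinants --- but it is a description of a family of proofs rather than a proof. Every quantitative step is deferred: the choice of the degrees $L_j$, the numbers $S$ and $T$ of interpolation points and derivatives, the radius in the Schwarz lemma, and the bookkeeping that turns these into the specific constants $3\cdot 30^{t+4}(t+1)^{5.5}$ and $1.4\cdot 30^{t+3}t^{4.5}$ constitute essentially the entire content of Matveev's theorem, and you explicitly set them aside as ``the delicate part.'' In particular, nothing in the sketch could be checked for correctness, and the claimed constants cannot be recovered from what is written.

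A secondary issue is that your outline describes the Philippon--Waldschmidt/Laurent packaging, whereas Matveev's own argument has a rather different internal structure (an induction on the number of logarithms and a special choice of auxiliary function built from binomial-type polynomials), which is precisely what yields constants sharper than the earlier general-purpose machinery; asserting that the standard extrapolation scheme ``yields exactly the announced numerical constants'' after calibration is an unsupported claim, not an argument. For the purposes of this paper the honest route is the one the author takes: cite the result. If you want to prove it, you would need to reproduce Matveev's explicit construction, and that is a substantial self-contained piece of work rather than a calibration exercise appended to a generic transcendence proof.
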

\noindent The two following Lemmas are due to Dujella and Peth\H o, and to Legendre respectively.

\noindent For a real number $X$, we write $\norm{X} :=\min\{\mid X-n\mid :n\in \mathbb{Z}\}$ for the distance of $X$ to the nearest integer.

\begin{lemma}[Dujella and Peth\H o, \cite{2}]\label{lem1}

Let $M$ a positive integer, let $p/q$ the convergent of the continued fraction expansion of $\kappa$ such that $q>6M$ and let  $A$, $B$, $\mu$ real numbers such that  $A>0$ and $B>1$. Let $\varepsilon:=\norm{\mu q}-M\norm{\kappa q}$.\\ If $\varepsilon>0$ then  there is no solution of the inequality 
\begin{equation*}
0<m\kappa -n+\mu< AB^{-m}
\end{equation*}
 in integers $m$ and $n$ with 
$$
\dfrac{\log (Aq/\varepsilon)}{\log B}\leqslant m \leqslant M.
$$

\end{lemma}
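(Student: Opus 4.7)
The plan is to argue by contradiction: suppose integers $m,n$ satisfy $0 < m\kappa - n + \mu < AB^{-m}$ with $m$ in the stated range, and squeeze out an upper bound on $m$ from the continued-fraction data that contradicts the hypothesized lower bound $m \geq \log(Aq/\varepsilon)/\log B$. The key idea is that multiplying by $q$ converts the almost-integer quantity $m\kappa + \mu$ into something controlled by the two Diophantine quantities $\|q\kappa\|$ and $\|q\mu\|$ that appear in the definition of $\varepsilon$.

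First I would multiply the inequality through by $q$, producing
\begin{equation*}
0 < m(q\kappa) - nq + q\mu < AqB^{-m}.
\end{equation*}
By the classical best-approximation property of continued-fraction convergents, $p/q$ being a convergent of $\kappa$ forces $|q\kappa - p| = \|q\kappa\|$. Setting $\delta := q\kappa - p$ (so $|\delta| = \|q\kappa\|$) and $N := mp - nq \in \mathbb{Z}$, the scaled inequality becomes
\begin{equation*}
0 < N + m\delta + q\mu < AqB^{-m}.
\end{equation*}

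Second, I would isolate $\|q\mu\|$ on the left. Because $N$ is an integer, the definition of the distance-to-nearest-integer function gives $\|q\mu\| \leq |q\mu + N|$, and the ordinary triangle inequality then yields
\begin{equation*}
\|q\mu\| \leq |N + q\mu| \leq |N + m\delta + q\mu| + m|\delta| < AqB^{-m} + M\|q\kappa\|.
\end{equation*}
Rearranging, $\varepsilon = \|q\mu\| - M\|q\kappa\| < AqB^{-m}$. Since the hypothesis $\varepsilon > 0$ lets me take logarithms, I obtain $m < \log(Aq/\varepsilon)/\log B$, which contradicts $m \geq \log(Aq/\varepsilon)/\log B$. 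Hence no such pair $(m,n)$ exists.

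The step that needs the most care is the triangle-inequality manipulation of the non-smooth function $\|\cdot\|$: one must invoke the trivial but essential identity $\|x\| \leq |x - k|$ valid for every $k \in \mathbb{Z}$, together with the fact that $|\delta| = \|q\kappa\|$, which holds precisely because $p/q$ is a \emph{convergent} rather than an arbitrary rational. The technical assumption $q > 6M$ does not appear explicitly in this logical chain; its role is to guarantee that the perturbation $M\|q\kappa\| \leq M/q < 1/6$ remains small, so that the hypothesis $\varepsilon > 0$ can realistically be verified and the resulting reduction of $m$ is meaningful in the applications of Section 3.
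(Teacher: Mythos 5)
Your proof is correct. The paper itself states this lemma without proof, citing Dujella and Peth\H o \cite{2}, and your argument is precisely the standard one from that reference: scale by $q$, replace $q\kappa$ by $p+\delta$ with $|\delta|=\norm{q\kappa}$ (valid because $p/q$ is a convergent and $|q\kappa-p|<1/q_{k+1}<1/2$, so $p$ is the nearest integer to $q\kappa$), absorb $mp-nq$ into the nearest-integer distance to get $\varepsilon\leq\norm{\mu q}-M\norm{\kappa q}<AqB^{-m}$, and conclude $m<\log(Aq/\varepsilon)/\log B$. Your closing remark is also accurate: the hypothesis $q>6M$ plays no role in this chain of inequalities and serves only to keep $M\norm{\kappa q}<M/q<1/6$ small enough that $\varepsilon>0$ is likely to hold in practice.
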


\begin{lemma}[Legendre]\label{lem2}

Let $\tau$ real number such that $x$, $y$ are integers such that 
\begin{equation*}
\left| \tau - \dfrac{x}{y}\right|<\dfrac{1}{2y^{2}},
\end{equation*}
  then $\dfrac{x}{y}=\dfrac{p_{k}}{q_{k}}$  is the convergent of $\tau$. 
  
\end{lemma}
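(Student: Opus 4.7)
The plan is to prove Legendre's criterion by directly exhibiting $x/y$ as a convergent of $\tau$ through the continued fraction algorithm applied to $x/y$ itself. Without loss of generality I would assume $\gcd(x,y)=1$ and $y\geq 1$ (otherwise cancel the gcd, which only shrinks $y$ and tightens the hypothesis). First, I would expand $x/y$ as a finite simple continued fraction $x/y = [a_0; a_1, \ldots, a_n]$, with convergents $h_i/k_i$, so that $h_n = x$ and $k_n = y$. Using the standard ambiguity $[a_0; a_1, \ldots, a_n] = [a_0; a_1, \ldots, a_n - 1, 1]$, I would choose the parity of $n$ so that $(-1)^{n-1}$ matches the sign of $\tau - x/y$; this parity choice is the crux of the argument.

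Next, I would introduce the real number $\omega$ defined by the linear-fractional relation
\begin{equation*}
\tau = \frac{\omega h_n + h_{n-1}}{\omega k_n + k_{n-1}},
\end{equation*}
which yields $\omega = (h_{n-1} - \tau k_{n-1})/(\tau k_n - h_n)$. Writing $\tau - x/y = \eta/y^2$ with $|\eta| < 1/2$ from the hypothesis, and using the determinant identity $h_n k_{n-1} - h_{n-1} k_n = (-1)^{n-1}$, I would expand both the numerator and denominator of $\omega$ and simplify to a closed form. The parity choice ensures $\omega > 0$, and the inequality $|\eta| < 1/2 \leq k_n/(k_n + k_{n-1})$ (which holds because $k_{n-1} \leq k_n = y$) then upgrades this to $\omega > 1$.

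With $\omega > 1$, the displayed relation is exactly the ``complete quotient'' representation of $\tau$, which says that the continued fraction expansion of $\tau$ opens with $[a_0; a_1, \ldots, a_n, \ldots]$, the remaining partial quotients being the expansion of $\omega$. Consequently $x/y = h_n/k_n$ is the $n$-th convergent of $\tau$, so $x/y = p_k/q_k$ for $k = n$, as claimed.

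The main obstacle will be the sign bookkeeping in computing $\omega$: one has to verify in both signs of $\tau - x/y$ that the parity toggle really delivers $\omega > 0$, and then convert this to $\omega > 1$ via the strict inequality $|\eta| < 1/2$. The degenerate corners, namely $n = 0$ (where $k_{-1} = 0$), $y = 1$ (where $x$ is forced to be the nearest integer to $\tau$ and thus $p_0$), and $\tau$ rational with a very short expansion, must be dispatched separately but each follows from the same computation.
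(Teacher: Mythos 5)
The paper gives no proof of this lemma at all: it is quoted as Legendre's classical criterion and used as a black box, so there is no internal argument to compare yours against. Your proposal is the standard textbook proof (complete-quotient representation, as in Hardy and Wright), and its skeleton is sound, but as written it contains one concrete error that would derail the computation if followed literally: the parity choice is inverted. With your stated determinant convention $h_nk_{n-1}-h_{n-1}k_n=(-1)^{n-1}$ and $\tau-x/y=\eta/y^2$, expanding
\[
\omega=\frac{h_{n-1}-\tau k_{n-1}}{\tau k_n-h_n}
=\frac{(-1)^n/y-\eta k_{n-1}/y^2}{\eta/y}
=\frac{(-1)^n}{\eta}-\frac{k_{n-1}}{k_n},
\]
so you must choose the parity of $n$ so that $(-1)^{n}$ --- not $(-1)^{n-1}$ --- agrees with the sign of $\tau-x/y$; with your stated choice one gets $\omega<0$ and the complete-quotient step collapses. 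Once that sign is fixed, your inequality $|\eta|<1/2\le k_n/(k_n+k_{n-1})$ does yield $\omega>1$ exactly as claimed, and the conclusion follows. One further loose end: the case $\tau=x/y$ (i.e.\ $\eta=0$) makes your formula for $\omega$ divide by zero and must be dispatched separately (trivially, since a rational in lowest terms is the final convergent of its own expansion); your list of degenerate corners does not quite include it.
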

\noindent Further, 
  \begin{equation*}
\left| \tau - \dfrac{x}{y}\right|>\dfrac{1}{(q_{k+1}+2)y^{2}}.
\end{equation*}
\section{Main result}
We now prove our main result Theorem \ref{theo4}.
\begin{proof}
Assume that
\begin{equation*}
\displaystyle \sum _{k=1}^{5}F_{n_k}=2^a
\end{equation*}
holds.\\
Let us first find relation between $n_1$ and $a$. 

\noindent Combining equation(\ref{eq1}) with the well-known inequality $F_n\leqslant \alpha^{n-1}$ for all $n\geqslant 1$, one gets that 
\begin{align*}
\displaystyle \sum _{k=1}^{5}F_{n_k}=2^p\leqslant& \displaystyle \sum _{i=1}^{5} \alpha^{n_i-1}\\
<&\displaystyle \sum _{i=1}^{5} 2^{n_i-1} \quad \because \alpha<2\\
<& 2^{n_1-1}\left(1+ \displaystyle \sum _{i=2}^{5} 2^{n_i-n_1} \right)\\
\leqslant & 2^{n_1-1}\left( 1+1+  \displaystyle \sum _{i=1}^{3} 2^{-i}  \right)= 2^{n_1-1}\left(2+\displaystyle \sum _{i=1}^{3} 2^{-i} \right)\\
<& 2^{n_1+1}.
\end{align*}
Hence
$$
2^a<2^{n_1+1}\Longrightarrow a< n_1+1\Longrightarrow a\leqslant n_1.
$$
This inequality will help  us to calculate some parameters.

\noindent If $n_1 \leqslant 400$, then a brute force search with \textit{Mathematica}
in the range $1\leqslant n_5\leqslant n_4\leqslant n_3\leqslant n_2\leqslant n_1\leqslant 400$ turned up only the solutions shown in the statement of Theorem \ref{theo4}. This took few minutes. Thus, for the rest of the paper we assume that $n_1 > 400$.

\subsection{Upper bound for $(n_1-n_2)\log \alpha$ in terms of $n_1$}$\\~$
\begin{lemma}
If $(n_1, n_2, n_3, n_4, n_5, a)$ is a positive solution of (\ref{eq1}) with $n_1\geqslant n_2\geqslant n_3\geqslant n_4\geqslant n_5$, then 
$$
(n_1-n_2)\log \alpha<2.32\times 10^{12}\log n_1.
$$
\end{lemma}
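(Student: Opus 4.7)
The plan is to reduce the lemma to a single application of Matveev's theorem (Theorem \ref{theo3}) to a three-term linear form in logarithms built from the Binet expansion. First, I would substitute $F_{n_k}=(\alpha^{n_k}-\beta^{n_k})/\sqrt{5}$ into (\ref{eq1}) and separate the dominant term:
$$\frac{\alpha^{n_1}}{\sqrt{5}}-2^{a}=-\frac{\beta^{n_1}}{\sqrt{5}}-\sum_{k=2}^{5}F_{n_k}.$$
Since $|\beta|<1$ and $F_{n_k}\leq \alpha^{n_k-1}\leq \alpha^{n_2-1}$ for $k\geq 2$, the right-hand side is bounded crudely by, say, $5\alpha^{n_2-1}$. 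Dividing by $2^{a}$ and using the already-established inequality $2^{a}\geq F_{n_1}\geq \alpha^{n_1-2}$, I obtain an upper bound on the linear form
$$\Lambda_1:=\alpha^{n_1}\cdot 5^{-1/2}\cdot 2^{-a}-1,\qquad |\Lambda_1|<5\alpha\cdot\alpha^{-(n_1-n_2)}.$$

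Next, I would verify that $\Lambda_1\neq 0$. Equality would give $\alpha^{2n_1}=5\cdot 2^{2a}\in\mathbb{Q}$, contradicting the fact that $\alpha$ is a quadratic irrational whose Galois conjugate $\beta$ has $|\beta|<1$ (so $\alpha^{2n_1}\neq \beta^{2n_1}$). With nonvanishing in hand, I would apply the real-field version of Matveev's theorem with $t=3$, $\mathbb{L}=\mathbb{Q}(\sqrt{5})$, $D=2$, and
$$\gamma_1=\alpha,\quad \gamma_2=2,\quad \gamma_3=\sqrt{5},\qquad b_1=n_1,\ b_2=-a,\ b_3=-1.$$
The logarithmic heights are standard: $h(\alpha)=\tfrac12\log\alpha$, $h(2)=\log 2$, $h(\sqrt{5})=\tfrac12\log 5$, so one may take $A_1=\log\alpha$, $A_2=2\log 2$, $A_3=\log 5$. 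Because $a\leq n_1$, one sets $B=n_1$.

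Matveev then yields a lower bound of the shape
$$\log|\Lambda_1|>-1.4\cdot 30^{6}\cdot 3^{4.5}\cdot 4\cdot (\log\alpha)(2\log 2)(\log 5)\cdot(1+\log 2)(1+\log n_1).$$
Comparing with the upper bound $\log|\Lambda_1|<\log(5\alpha)-(n_1-n_2)\log\alpha$ and absorbing the tiny additive constants and the $(1+\log n_1)/\log n_1$ factor into the coefficient (valid because $n_1>400$) gives
$$(n_1-n_2)\log\alpha<2.32\times 10^{12}\log n_1,$$
as required. The main obstacle is purely arithmetic: carrying out the Matveev numerics cleanly so that the constant collapses to the advertised $2.32\times 10^{12}$. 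The nonvanishing step and the telescoping of the error terms are straightforward; everything else is bookkeeping.
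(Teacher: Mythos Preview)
Your proposal is correct and follows essentially the same route as the paper: isolate the dominant Binet term, bound the remaining pieces by a constant times $\alpha^{n_2}$, form the three-term linear form in $2,\alpha,\sqrt{5}$, check nonvanishing via the Galois conjugation/squaring trick, and apply Matveev over $\mathbb{Q}(\sqrt{5})$ with $B=n_1$. The only cosmetic differences are that the paper divides by $\alpha^{n_1}/\sqrt{5}$ rather than by $2^{a}$ (yielding the constant $11.5$ instead of your $5\alpha$) and rounds the $A_j$ to $1.4,\,0.5,\,1.7$; your slightly tighter choices of $A_j$ comfortably reproduce the stated $2.32\times 10^{12}$.
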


\begin{proof}

Rewriting equation (\ref{eq1}), we get
$$
\dfrac{\alpha^{n_1}}{\sqrt{5}}-2^a= \dfrac{\beta^{n_1}}{\sqrt{5}}-(F_{n_{2}}+F_{n_{3}}+F_{n_{4}}+ F_{n_{5}}).
$$
Taking absolute values on the above equation, we obtain
$$
\left|\dfrac{\alpha^{n_1}}{\sqrt{5}}-2^a \right|\leqslant \left| \dfrac{\beta^{n_1}}{\sqrt{5}}\right|+ (F_{n_{2}}+F_{n_{3}}+F_{n_{4}}+F_{n_{5}})< \dfrac{|\beta|^{n_1}}{\sqrt{5}}+ (\alpha^{n_{2}}+\alpha^{n_{3}}+\alpha^{n_{4}}+\alpha^{n_{5}}),
$$
and
$$
\left|\dfrac{\alpha^{n_1}}{\sqrt{5}}-2^a \right|< \dfrac{1}{2}+ \left(\alpha^{n_{2}}+\alpha^{n_{3}}+\alpha^{n_{4}}+\alpha^{n_{5}}\right)\quad \text{where we used}\quad F_n\leqslant \alpha^{n-1}.
$$

\noindent Dividing both side of the above equation by $\alpha^{n_{1}}/\sqrt{5}$, we get 

\begin{align*}
\left|1-2^a\cdot \alpha^{-n_1}\cdot \sqrt{5}\right|<& \dfrac{\sqrt{5}}{2\alpha^{n_1}}+ \left(\alpha^{n_{2}-n_1}+\alpha^{n_{3}-n_1}+\alpha^{n_{4}-n_1}+\alpha^{n_{5}-n_1}\right)\sqrt{5}\\
<& \dfrac{\sqrt{5}}{2\alpha^{n_1}}+\dfrac{\sqrt{5}}{\alpha^{n_1-n_2}}+\dfrac{\sqrt{5}}{\alpha^{n_1-n_3}}+\dfrac{\sqrt{5}}{\alpha^{n_1-n_4}}+\dfrac{\sqrt{5}}{\alpha^{n_1-n_5}}.
\end{align*}

\noindent Taking into account the assumption $n_5\leqslant n_4\leqslant n_3\leqslant n_3\leqslant n_2\leqslant n_1$, we get

\begin{equation}\label{eq:2}
|\Lambda_1|=\left|1-2^a\cdot \alpha^{-n_1}\cdot \sqrt{5}\right|< \dfrac{11.5}{\alpha^{n_1-n_2}}, \quad \text{where}\quad \Lambda_1=1-2^a\cdot \alpha^{-n_1}\cdot \sqrt{5}.
\end{equation}

\noindent Let apply Matveev's theorem, with the following parameters $t:=3$ and 
$$
\gamma_1:=2,\quad \gamma_2:=\alpha,\quad \gamma_3:=\sqrt{5},\quad b_1:=a,\quad b_2:=-n,\quad \text{and}\quad b_3:=1.
$$

\noindent Since $\gamma_1, \gamma_2, \gamma_3\in \mathbb{K}:=\mathbb{Q}(\sqrt{5}),$ we can take $D:=2$. Before applying Matveev's theorem, we have to check the last condition: the left-hand side of (\ref{eq:2}) is not zero. Indeed, if it were zero, we would then get that $2^a\sqrt{5}=\alpha^n$. Squaring the previous relation, we get $\alpha^{2n}=5\cdot 2^{2a}=5\cdot 4^a$. This implies that $\alpha^{2n}\in \mathbb{Z}$. Which is impossible. Then $\Lambda_1\neq 0$. The logarithmic height of $\gamma_1, \gamma_2$ and $\gamma_3$ are:

\noindent $h(\gamma_1)=\log 2= 0.6931\ldots$, so we can choose $A_1:=1.4$.

\noindent $h(\gamma_2)=\dfrac{1}{2}\log \alpha=0.2406\ldots$, so we can choose $A_2:= 0.5$.

\noindent $h(\gamma_3)=\log \sqrt{5}=0.8047\ldots$, it follows that we can choose $A_3:=1.7$.

\noindent Since $a< n_1+1$, $B:=\max \{|b_1|, |b_2|, |b_3|\}=n_1.$ Matveev's result informs us that
\begin{equation}\label{eq:3}
\left|1-2^a\cdot \alpha^{n_1}\cdot \sqrt{5}\right|>\exp \left(-c_1\cdot (1+\log n)\cdot 1.4\cdot 0.5\cdot 1.7\right),
\end{equation}
where $c_1:=1.4\cdot 30^6\cdot 3^{4.5}\cdot 2^2\cdot (1+\log 2)<9.7\times 10^{11}.$

\noindent Taking $\log$ in inequality (\ref{eq:2}), we get
$$
\log |\Lambda_1|< \log (11.5)-(n_1-n_2)\log \alpha.
$$

\noindent Taking $\log$ in inequality (\ref{eq:3}), we get
$$
\log |\Lambda_1|> 2.31\times 10^{12}\log n_1.
$$

\noindent Comparing the previous two inequalities, we get
$$
(n_1-n_2)\log \alpha-\log (11.5) < 2.31\times 10^{12}\log n_1,
$$
where we used $1+\log n_1< 2\log n_1$ which holds for all $n_1\geqslant 3$. Then we have
\begin{equation}\label{eq:4}
(n_1-n_2)\log \alpha<2.32\times 10^{12}\log n_1.
\end{equation}
\end{proof}
\subsection{Upper bound for $(n_1-n_3)\log \alpha$ in terms of $n_1$}$\\~$
\begin{lemma}
If $(n_1, n_2, n_3, n_4, n_5, a)$ is a positive solution of (\ref{eq1}) with $n_1\geqslant n_2\geqslant n_3\geqslant n_4\geqslant n_5$, then 
$$
(n_1-n_3)\log \alpha< 3.29\times 10^{24}\log^2 n_1.
$$
\end{lemma}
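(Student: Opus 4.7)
The plan is to repeat the argument of the previous lemma but now isolate the two largest Fibonacci terms. Using Binet's formula and rewriting (\ref{eq1}) as
$$
\frac{\alpha^{n_1}+\alpha^{n_2}}{\sqrt 5}-2^a = \frac{\beta^{n_1}+\beta^{n_2}}{\sqrt 5}-(F_{n_3}+F_{n_4}+F_{n_5}),
$$
I would take absolute values, use $|\beta|<1$ and $F_n\leq \alpha^{n-1}$ to bound the right-hand side, and then divide through by $\alpha^{n_1}(1+\alpha^{n_2-n_1})/\sqrt 5$. This produces
$$
|\Lambda_2|:=\left|1-2^a\cdot\sqrt 5\cdot \alpha^{-n_1}\bigl(1+\alpha^{n_2-n_1}\bigr)^{-1}\right| < \frac{C}{\alpha^{n_1-n_3}}
$$
for some small explicit constant $C$ (the dominant contribution on the right comes from $F_{n_3}$, while $F_{n_4}, F_{n_5}$ and the $\beta$-terms are absorbed into $C$).

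Next I would apply Matveev's theorem (Theorem \ref{theo3}) in $\mathbb{K}=\mathbb{Q}(\sqrt 5)$ with $D=2$, $t=4$, and
$$
\gamma_1:=2,\quad \gamma_2:=\alpha,\quad \gamma_3:=\sqrt 5,\quad \gamma_4:=1+\alpha^{n_2-n_1},
$$
and exponents $b_1=a$, $b_2=-n_1$, $b_3=1$, $b_4=-1$, so that $B=n_1$. Non-vanishing of $\Lambda_2$ is checked by Galois conjugation: $\Lambda_2=0$ would give $2^a\sqrt 5=\alpha^{n_1}+\alpha^{n_2}$, and conjugating over $\mathbb{Q}$ would give $-2^a\sqrt 5=\beta^{n_1}+\beta^{n_2}$; adding the two identities yields $0=L_{n_1}+L_{n_2}$, which is impossible. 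The values $A_1,A_2,A_3$ are unchanged; the new ingredient is
$$
h(\gamma_4)\leq h(\alpha^{n_2-n_1})+\log 2 = \tfrac{1}{2}(n_1-n_2)\log\alpha+\log 2,
$$
so that by the previous lemma I may take $A_4$ of the shape $(n_1-n_2)\log\alpha+2\log 2 \leq 2.32\times 10^{12}\log n_1+2\log 2$, i.e.\ linear in $\log n_1$.

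Plugging these into Matveev's real-case bound yields a lower bound of the form
$$
\log|\Lambda_2|>-c_2\cdot A_1 A_2 A_3 A_4\cdot (1+\log n_1)
$$
with $c_2:=1.4\cdot 30^{7}\cdot 4^{4.5}\cdot 2^2\cdot(1+\log 2)$; comparing with the upper bound $\log|\Lambda_2|<\log C-(n_1-n_3)\log\alpha$ and using $1+\log n_1<2\log n_1$ (valid for $n_1>400$) produces the claimed estimate $(n_1-n_3)\log\alpha<3.29\times 10^{24}\log^2 n_1$. The main obstacle is precisely the feedback loop created by the height of $\gamma_4$: because $A_4$ itself is proportional to $\log n_1$ via the previous lemma, Matveev's $(1+\log B)$ factor combines with $A_4$ to produce the $\log^2 n_1$ dependence rather than the $\log n_1$ dependence obtained in the first lemma. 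The bookkeeping of the constants in $c_2\cdot A_1A_2A_3\cdot A_4$ is the delicate step; apart from this, the argument is structurally identical to the proof of the previous lemma.
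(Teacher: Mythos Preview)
Your strategy is structurally the same as the paper's---isolate the two largest terms, bound the tail by $C/\alpha^{n_1-n_3}$, apply Matveev, and feed in the previous lemma to turn the height of the new factor into a $\log n_1$---but there is one substantive difference: you apply Matveev with $t=4$ by keeping $\sqrt 5$ and $1+\alpha^{n_2-n_1}$ separate, whereas the paper takes $t=3$ and packages them together as a single
$$
\gamma_3:=\sqrt 5\,(1+\alpha^{n_2-n_1})^{-1},\qquad A_3:=3+(n_1-n_2)\log\alpha .
$$
This is not merely cosmetic. Matveev's constant contains the factor $30^{t+3}\,t^{4.5}$, so passing from $t=3$ to $t=4$ multiplies it by $30\cdot(4/3)^{4.5}\approx 110$; in addition you now carry an extra factor $A_3=1.7$ in the product $A_1A_2A_3A_4$. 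Altogether your bound is roughly $180$ times larger than the paper's, i.e.\ of order $6\times 10^{26}\log^2 n_1$, not the stated $3.29\times 10^{24}\log^2 n_1$. So your argument proves a lemma of the right shape but does not deliver the constant in the statement; to get that constant you must bundle $\sqrt 5$ into $\gamma_3$ and work with three factors.

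Two minor remarks. Your non-vanishing argument via $L_{n_1}+L_{n_2}=0$ is perfectly fine; the paper instead subtracts the conjugate equation and derives $\alpha^{n_1}<|\beta|^{n_1}+|\beta|^{n_2}<1$, which is equivalent. And your ``delicate step'' is exactly right in spirit: the $\log^2 n_1$ comes from $A_4\asymp\log n_1$ (via the previous lemma) meeting the $(1+\log B)$ factor; the only issue is the size of the numerical constant, which is governed by the choice of $t$.
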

\begin{proof}
Let us now consider a second linear form in logarithms. Rewriting equation (\ref{eq1}) as follows
$$
\dfrac{\alpha^{n_1}}{\sqrt{5}}+\dfrac{\alpha^{n_2}}{\sqrt{5}}-2^a=\dfrac{\beta^{n_1}}{\sqrt{5}}+\dfrac{\beta^{n_2}}{\sqrt{5}}-\left(F_{n_3}+F_{n_4}+F_{n_5}\right).
$$
Taking absolute values on the above equation and the fact that $\beta=(1-\sqrt{5})/2$, we get
\begin{align*}
\left| \dfrac{\alpha^{n_1}}{\sqrt{5}}\left(1+\alpha^{n_2-n_1}\right) -2^a\right|\leqslant &\dfrac{|\beta|^{n_1}+|\beta|^{n_2}}{\sqrt{5}}+F_{n_3}+F_{n_4}+F_{n_5}\\
<& \dfrac{1}{3}+ \alpha^{n_3}+\alpha^{n_4}+\alpha^{n_5}\quad \text{for all} \quad n_1\geqslant 5\quad \text{and}\quad n_2\geqslant 5.
\end{align*}
Dividing both sides of the above inequality by $\dfrac{\alpha^{n_1}}{\sqrt{5}}\left(1+\alpha^{n_2-n_1}\right)$, we obtain 
\begin{equation}\label{eq:5}
|\Lambda_2|=\left|1-2^a\cdot\alpha^{-n_1}\cdot\sqrt{5}\left(1+\alpha^{n_2-n_1}\right)^{-1}\right|<\dfrac{8}{\alpha^{n_2-n_1}}\quad\text{where}\quad \Lambda_2= 1-2^a\cdot\alpha^{-n_1}\cdot\sqrt{5}\left(1+\alpha^{n_2-n_1}\right)^{-1}.
\end{equation}
Let apply  Matveev's theorem for the second time with the following data 
$$
t:=3,\quad \gamma_1:=2,\quad \gamma_2:=\alpha,\quad \gamma_3:=\sqrt{5}\left(1+\alpha^{n_2-n_1}\right)^{-1},\quad b_1:=a,\quad b_2:=-n_1,\quad \text{and}\quad b_3:=1.
$$
Since $\gamma_1, \gamma_2, \gamma_3\in \mathbb{K}:=\mathbb{Q}(\sqrt{5}),$ we can take $D:=2$. The left hand side of (\ref{eq:5}) is not zero, otherwise, we would get the relation
\begin{equation}\label{eq:6}
2^a\sqrt{5}=\alpha^{n_1}+\alpha^{n_2}.
\end{equation}
Conjugating (\ref{eq:6}) in the field $\mathbb{Q}(\sqrt{5})$, we get
\begin{equation}\label{eq:7}
-2^a\sqrt{5}=\beta^{n_1}+\beta^{n_2}.
\end{equation}

\noindent Combining (\ref{eq:6}) and (\ref{eq:7}), we get 
$$
\alpha^{n_1}<\alpha^{n_1}+\alpha^{n_2}=|\beta^{n_1}+\beta^{n_2}|\leqslant |\beta|^{n_1}+|\beta|^{n_2}< 1
$$

\noindent which is impossible for $n_1>400$. Hence $\Lambda_2\neq 0$.
We know that, $h(\gamma_1)=\log 2$ and $h(\gamma_2)=\dfrac{1}{2}\log \alpha$.
Let us now estimate $h(\gamma_3)$ by first observing that
$$
\gamma_3=\dfrac{\sqrt{5}}{1+\alpha^{n_2-n_1}}<\sqrt{5}\quad \text{and}\quad \gamma_3^{-1}=\dfrac{1+\alpha^{n_2-n_1}}{\sqrt{5}}<\dfrac{2}{\sqrt{5}},
$$
so that $|\log\gamma_3|<1.$
Using proprieties of logarithmic height stated in Section $2$, we have
$$
h(\gamma_3)\leqslant \log \sqrt{5}+ |n_2-n_1|\left(\dfrac{\log \alpha}{2}\right)+\log 2= \log (2\sqrt{5})+ (n_1-n_2)\left(\dfrac{\log \alpha}{2}\right).
$$ 
Hence, we can take $A_3:=3+(n_1-n_2)\log \alpha>\max \{2h(\gamma_3), |\log \gamma_3|,0.16\}$. 

\noindent Matveev's theorem implies that 
$$
\exp \left(-c_2(1+\log n_1)\cdot 1.4\cdot 0.5\cdot(3+(n_1-n_2)\log \alpha)\right)
$$
where $c_2:=1.4\cdot 30^6\cdot 3^{4.5}\cdot 2^2\cdot (1+\log 2)<9.7\times 10^{11}.$

\noindent Since $(1+\log n_1)< 2\log n_1$ hold for $n_1\geqslant 3,$ from (\ref{eq:5}), we have
\begin{equation}\label{eq:8}
(n_1-n_3)\log \alpha-\log 8< 1.4\times 10^{12}\log n_1(3+(n_1-n_2)\log \alpha).
\end{equation} 

\noindent Putting relation (\ref{eq:4}) in the right-hand side of (\ref{eq:8}), we get

\begin{equation}\label{eq:9}
(n_1-n_3)\log \alpha< 3.29\times 10^{24}\log^2 n_1.
\end{equation}
\end{proof}
\subsection{Upper bound for $(n_1-n_4)\log \alpha$ in terms of $n_1$}$\\~$
\begin{lemma}
If $(n_1, n_2, n_3, n_4, n_5, a)$ is a positive solution of (\ref{eq1}) with $n_1\geqslant n_2\geqslant n_3\geqslant n_4\geqslant n_5$, then 
$$
(n_1-n_4)\log \alpha< 9.3\times 10^{36}\log^3n_1.
$$
\end{lemma}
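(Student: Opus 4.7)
The plan is to mimic the two previous lemmas with a third application of Matveev's theorem, this time isolating the three largest Fibonacci summands on one side and pushing $F_{n_4}+F_{n_5}$ into the error. First I would rewrite (\ref{eq1}) as
$$
\frac{\alpha^{n_1}}{\sqrt{5}}\bigl(1+\alpha^{n_2-n_1}+\alpha^{n_3-n_1}\bigr)-2^a=\frac{\beta^{n_1}+\beta^{n_2}+\beta^{n_3}}{\sqrt{5}}-(F_{n_4}+F_{n_5}),
$$
take absolute values using $|\beta|<1$ and $F_n\leqslant\alpha^{n-1}$, and divide by $\alpha^{n_1}(1+\alpha^{n_2-n_1}+\alpha^{n_3-n_1})/\sqrt{5}$ to reach an inequality of the shape
$$
|\Lambda_3|=\left|1-2^a\cdot\alpha^{-n_1}\cdot\sqrt{5}\bigl(1+\alpha^{n_2-n_1}+\alpha^{n_3-n_1}\bigr)^{-1}\right|<\frac{C}{\alpha^{n_1-n_4}}
$$
for some small absolute constant $C$.

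Next I would apply Matveev with $t:=3$, $D:=2$, $\gamma_1:=2$, $\gamma_2:=\alpha$, $\gamma_3:=\sqrt{5}\bigl(1+\alpha^{n_2-n_1}+\alpha^{n_3-n_1}\bigr)^{-1}$, $b_1:=a$, $b_2:=-n_1$, $b_3:=1$, so that $B=n_1$. Non-vanishing of $\Lambda_3$ follows by the same Galois-conjugation trick used in the previous lemma: if $\Lambda_3=0$ then $2^a\sqrt{5}=\alpha^{n_1}+\alpha^{n_2}+\alpha^{n_3}$, and conjugating in $\mathbb{Q}(\sqrt{5})$ produces $-2^a\sqrt{5}=\beta^{n_1}+\beta^{n_2}+\beta^{n_3}$, whence $\alpha^{n_1}<\alpha^{n_1}+\alpha^{n_2}+\alpha^{n_3}=|\beta^{n_1}+\beta^{n_2}+\beta^{n_3}|<3$, contradicting $n_1>400$.

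The heights $h(\gamma_1)=\log 2$ and $h(\gamma_2)=\tfrac{1}{2}\log\alpha$ are unchanged, so $A_1=1.4$ and $A_2=0.5$ remain valid. For $h(\gamma_3)$ I would apply the subadditivity properties listed in Section $2$ twice to obtain
$$
h(\gamma_3)\leqslant \log\sqrt{5}+2\log 2+\frac{(n_1-n_2)+(n_1-n_3)}{2}\log\alpha,
$$
so that $A_3:=C'+(n_1-n_2)\log\alpha+(n_1-n_3)\log\alpha$ is an admissible choice for a small explicit $C'$. Substituting the bounds (\ref{eq:4}) and (\ref{eq:9}) from the previous two lemmas collapses $A_3$ to $O(\log^2 n_1)$, with the $(n_1-n_3)\log\alpha$ term dominating. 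Combining Matveev's lower bound for $\log|\Lambda_3|$ with the upper bound $\log C-(n_1-n_4)\log\alpha$ coming from above, and using $1+\log n_1<2\log n_1$, then yields the claimed inequality $(n_1-n_4)\log\alpha<9.3\times 10^{36}\log^3 n_1$.

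The hard part is not structural: the non-vanishing and the height estimate for $\gamma_3$ are routine continuations of what was carried out for $\Lambda_1$ and $\Lambda_2$. The one thing that requires care is purely arithmetic — ensuring the final product $c_3\cdot A_1\cdot A_2\cdot A_3\cdot(1+\log n_1)$ comes out at precisely the stated $9.3\times 10^{36}\log^3 n_1$ after absorbing all lower-order terms. If the nominal constant drifts slightly high, the subleading constants $C$ and $C'$ can be tightened to restore the target.
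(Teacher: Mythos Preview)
Your proposal is correct and follows essentially the same route as the paper: rewrite (\ref{eq1}) to isolate the three largest summands, bound $|\Lambda_3|<4/\alpha^{n_1-n_4}$, apply Matveev with the same $\gamma_1,\gamma_2,\gamma_3$, rule out $\Lambda_3=0$ via conjugation in $\mathbb{Q}(\sqrt{5})$, take $A_3=4+(n_1-n_2)\log\alpha+(n_1-n_3)\log\alpha$, and then feed in (\ref{eq:4}) and (\ref{eq:9}). The only cosmetic difference is that the paper uses $\log 3$ rather than your $2\log 2$ in the height bound for $\gamma_3$, which does not affect the final constant.
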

\begin{proof}

Let us consider a third linear form in logarithms. To this end, we again rewrite (\ref{eq1}) as follows
$$
\dfrac{\alpha^{n_1}+\alpha^{n_2}+\alpha^{n_3}}{\sqrt{5}}-2^a=\dfrac{\beta^{n_1}+\beta^{n_2}+\beta^{n_3}}{\sqrt{5}}-F_{n_4}-F_{n_5}.
$$
Taking absolute values on both sides, we obtain
\begin{align*}
\left|\dfrac{\alpha^{n_1}}{\sqrt{5}}\left(1+\alpha^{n_2-n_1}+\alpha^{n_3-n_1}\right)-2^a\right|\leqslant & \dfrac{|\beta|^{n_1}+|\beta|^{n_2}+|\beta|^{n_3}}{\sqrt{5}}+F_{n_4}+F_{n_5}\\
< & \dfrac{3}{4}+\alpha^{n_4}+\alpha^{n_5}\quad \text{for all}\quad n_1>400,\quad \text{and}\quad n_2, n_3, n_4, n_5\geqslant 1.
\end{align*}
Thus we have
\begin{equation}\label{eq:10}
|\Lambda_3|=\left|1-2^a\cdot \alpha^{-n_1}\cdot\sqrt{5}\left(1+\alpha^{n_2-n_1}+\alpha^{n_3-n_1}\right)^{-1}\right|< \dfrac{4}{\alpha^{n_1-n_4}},
\end{equation}
$$
\text{where}\quad \Lambda_3=1-2^a\cdot \alpha^{-n_1}\cdot\sqrt{5}\left(1+\alpha^{n_2-n_1}+\alpha^{n_3-n_1}\right)^{-1}.
$$
\noindent In a third application of Matveev's theorem, we can take parameters
$$
t:=3,\quad \gamma_1:=2,\quad \gamma_2:=\alpha,\quad \gamma_3:=\sqrt{5}\left(1+\alpha^{n_2-n_1}+\alpha^{n_3-n_1}\right)^{-1},\quad b_1:=a,\quad b_2:=-n,\quad \text{and},\quad b_3:=1.
$$
Since $\gamma_1, \gamma_2, \gamma_3\in \mathbb{K}:=\mathbb{Q}(\sqrt{5}),$ we can take $D:=2$. The left hand side of (\ref{eq:10}) is not zero. The proof is done by contradiction. Suppose the contrary. Then
$$
2^a\sqrt{5}=\alpha^{n_1}+\alpha^{n_2}+\alpha^{n_3}.
$$
Taking the conjugate in the field $\mathbb{Q}(\sqrt{5})$, we get
$$
-2^a\sqrt{5}=\beta^{n_1}+\beta^{n_2}+\beta^{n_3},
$$
which leads to
$$
\alpha^{n_1}< \alpha^{n_1}+\alpha^{n_2}+\alpha^{n_3}=|\beta^{n_1}+\beta^{n_2}+\beta^{n_3}|\leqslant |\beta|^{n_1}+|\beta|^{n_2}+|\beta|^{n_3}< 1
$$
and leads to a contradiction since $n_1>400.$ Hence $\Lambda_3\neq 0$.

\noindent As we did before, we can take $A_1:=1.4, A_2:=0.5$ and $B:=n_1.$ We can also see that 
$$
\gamma_3=\dfrac{\sqrt{5}}{1+\alpha^{n_2-n_1}+\alpha^{n_3-n_1}}< \sqrt{5}\quad \text{and}\quad \gamma_3^{-1}=\dfrac{1+\alpha^{n_2-n_1}+\alpha^{n_3-n_1}}{\sqrt{5}}<\dfrac{3}{\sqrt{5}},
$$
so $|\log \gamma_3|<1$. Applying proprieties on logarithmic height, we estimate $h(\gamma_3)$.
Hence 
\begin{align*}
h(\gamma_3)\leqslant & \log \sqrt{5}+|n_2-n_1|\left(\dfrac{\log\alpha}{2}\right)+|n_3-n_1|\left(\dfrac{\log\alpha}{2}\right)+\log 3\\
=& \log(3\sqrt{5})+(n_1-n_2)\left(\dfrac{\log\alpha}{2}\right)+(n_1-n_3)\left(\dfrac{\log\alpha}{2}\right);
\end{align*}
so we can take
$$
A_3:=4+(n_1-n_2)\log \alpha+(n_1-n_3)\log \alpha> \max \{2h(\gamma_3), |\log \gamma_3|, 0.16\}.
$$
A lower bound on the left-hand side of (\ref{eq:10}) is
$$
\exp (-c_3\cdot(1+\log n_1)\cdot 1.4\cdot 0.5 \cdot(4+(n_1-n_2)\log \alpha+ (n_1-n_3)\log \alpha))
$$

\noindent where $c_3=1.4\cdot 30^6\cdot 3^{4.5}\cdot 2^2\cdot (1+\log 2)<9.7\times 10^{11}.$

\noindent From inequality (\ref{eq:10}), we have 
\begin{equation}\label{eq:11}
(n_1-n_4)\log \alpha< 1.4\times 10^{12}\log n_1\cdot(4+(n_1-n_2)\log \alpha+(n_1-n_3)\log \alpha).
\end{equation}
Combining equation (\ref{eq:4}) and (\ref{eq:9}) in the right-most terms of equation (\ref{eq:11}) and performing the respective calculations, we get
\begin{equation}\label{eq:12}
(n_1-n_4)\log \alpha< 9.3\times 10^{36}\log^3n_1.
\end{equation} 
\end{proof}
\subsection{Upper bound for $(n_1-n_5)\log \alpha$ in terms of $n_1$}$\\~$
\begin{lemma}
If $(n_1, n_2, n_3, n_4, n_5, a)$ is a positive solution of (\ref{eq1}) with $n_1\geqslant n_2\geqslant n_3\geqslant n_4\geqslant n_5$, then 
$$
(n_1-n_5)\log \alpha< 40.32\times 10^{48}\log^4n_1.
$$
\end{lemma}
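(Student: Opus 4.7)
The plan is to follow the pattern of the three previous lemmas, applying Matveev's theorem one more time to a linear form that captures $n_1-n_5$. First I would rewrite equation (\ref{eq1}) as
\[\dfrac{\alpha^{n_1}+\alpha^{n_2}+\alpha^{n_3}+\alpha^{n_4}}{\sqrt{5}}-2^a=\dfrac{\beta^{n_1}+\beta^{n_2}+\beta^{n_3}+\beta^{n_4}}{\sqrt{5}}-F_{n_5},\]
then take absolute values and divide through by $\tfrac{\alpha^{n_1}}{\sqrt{5}}\left(1+\alpha^{n_2-n_1}+\alpha^{n_3-n_1}+\alpha^{n_4-n_1}\right)$. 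Since the $\beta$-contributions total less than $1$ and $F_{n_5}<\alpha^{n_5}$, this yields an estimate of the shape
\[|\Lambda_4|=\left|1-2^a\cdot\alpha^{-n_1}\cdot\sqrt{5}\left(1+\alpha^{n_2-n_1}+\alpha^{n_3-n_1}+\alpha^{n_4-n_1}\right)^{-1}\right|<\dfrac{C}{\alpha^{n_1-n_5}},\]
with $C$ a small absolute constant (of order $3$ or $4$).

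The second step is to invoke Matveev's theorem with $t:=3$, $\gamma_1:=2$, $\gamma_2:=\alpha$, $\gamma_3:=\sqrt{5}\left(1+\alpha^{n_2-n_1}+\alpha^{n_3-n_1}+\alpha^{n_4-n_1}\right)^{-1}$, exponents $b_1:=a$, $b_2:=-n_1$, $b_3:=1$, and $D:=2$. The non-vanishing of $\Lambda_4$ is obtained by the same Galois-conjugation argument used in the previous two lemmas: if $\Lambda_4=0$ then $2^a\sqrt{5}=\alpha^{n_1}+\alpha^{n_2}+\alpha^{n_3}+\alpha^{n_4}$, and conjugating in $\mathbb{Q}(\sqrt{5})$ and combining forces $\alpha^{n_1}<|\beta|^{n_1}+|\beta|^{n_2}+|\beta|^{n_3}+|\beta|^{n_4}<1$, contradicting $n_1>400$. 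A direct estimate gives $|\log\gamma_3|<1$, and the height inequalities from Section~$2$ produce
\[h(\gamma_3)\leqslant\log(4\sqrt{5})+(n_1-n_2)\dfrac{\log\alpha}{2}+(n_1-n_3)\dfrac{\log\alpha}{2}+(n_1-n_4)\dfrac{\log\alpha}{2},\]
so I can take $A_3:=5+(n_1-n_2)\log\alpha+(n_1-n_3)\log\alpha+(n_1-n_4)\log\alpha$, while $A_1:=1.4$, $A_2:=0.5$, and $B:=n_1$ carry over unchanged.

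Feeding these parameters into Matveev's lower bound and confronting it with the upper bound on $|\Lambda_4|$, using $1+\log n_1<2\log n_1$ valid for $n_1\geqslant 3$, delivers
\[(n_1-n_5)\log\alpha<1.4\times 10^{12}\log n_1\cdot\bigl(5+(n_1-n_2)\log\alpha+(n_1-n_3)\log\alpha+(n_1-n_4)\log\alpha\bigr).\]
The final step is to substitute the three previously proven bounds (\ref{eq:4}), (\ref{eq:9}), (\ref{eq:12}) into the right-hand side. For $n_1>400$ the $9.3\times 10^{36}\log^3 n_1$ term dominates, and a mildly coarse grouping (bounding the whole bracket by roughly $3$ times this dominant term, i.e. by about $28.8\times 10^{36}\log^3 n_1$) collapses the constants to the stated $40.32\times 10^{48}\log^4 n_1$.

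The main obstacle is purely bookkeeping rather than conceptual: one must propagate the constants correctly through a fourth round of Matveev, and verify carefully that the chosen $A_3$ genuinely majorises $\max\{2h(\gamma_3),|\log\gamma_3|,0.16\}$ over the range $n_1>400$. Both checks are mechanical given the template of the preceding three lemmas, so no new idea is needed beyond performing the substitutions cleanly.
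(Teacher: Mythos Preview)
Your proposal is correct and follows the paper's proof essentially line by line: the same rewriting of (\ref{eq1}), the same linear form $\Lambda_4$ with the same $\gamma_3=\sqrt{5}\left(1+\alpha^{n_2-n_1}+\alpha^{n_3-n_1}+\alpha^{n_4-n_1}\right)^{-1}$, the same conjugation argument for $\Lambda_4\neq 0$, the same height estimate and choice $A_3=5+(n_1-n_2)\log\alpha+(n_1-n_3)\log\alpha+(n_1-n_4)\log\alpha$, and the same substitution of (\ref{eq:4}), (\ref{eq:9}), (\ref{eq:12}) at the end. The paper obtains the explicit constant $C=2$ in the upper bound for $|\Lambda_4|$, but otherwise there is no difference in approach.
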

\begin{proof}
Let us now consider a forth linear form in logarithms. Rerwriting (\ref{eq1}) once again by separating large terms and small terms, we get

$$
\dfrac{\alpha^{n_1}+\alpha^{n_2}+\alpha^{n_3}+\alpha^{n_4}}{\sqrt{5}}-2^a=\dfrac{\beta^{n_1}+\beta^{n_2}+\beta^{n_3}+\beta^{n_4}}{\sqrt{5}}-F_{n_5}.
$$
Taking absolute values on both sides, we get
\begin{align*}
\left|\dfrac{\alpha^{n_1}}{\sqrt{5}}\left(1+\alpha^{n_2-n_1}+\alpha^{n_3-n_1}+\alpha^{n_4-n_1}\right)-2^a\right|\leqslant & \dfrac{|\beta|^{n_1}+|\beta|^{n_2}+|\beta|^{n_3}+|\beta|^{n_4}}{\sqrt{5}}+F_{n_5}\\
< & \dfrac{4}{5}+\alpha^{n_5}\quad \text{for all}\quad n_1>400,\quad \text{and}\quad n_2, n_3, n_4, n_5\geqslant 1.
\end{align*}

\noindent Dividing both sides of the above relation by the fist term of the right hand side of the previous equation, we get 
\begin{equation}\label{eq:13}
|\Lambda_4|=\left|1-2^a\cdot\alpha^{-n_1}\cdot \sqrt{5}\left(1+\alpha^{n_2-n_1}+\alpha^{n_3-n_1}+\alpha^{n_4-n_1}\right)^{-1}\right|<\dfrac{2}{\alpha^{n_1-n_5}},
\end{equation}
$$
\text{where}\quad \Lambda_4=1-2^a\cdot\alpha^{-n_1}\cdot \sqrt{5}\left(1+\alpha^{n_2-n_1}+\alpha^{n_3-n_1}+\alpha^{n_4-n_1}\right)^{-1}.
$$
\noindent In the application of Matveev's theorem, we have the following parameters
$$
\gamma_1:=2,\quad \gamma_2:=\alpha,\quad \gamma_3:=\sqrt{5}\left(1+\alpha^{n_2-n_1}+\alpha^{n_3-n_1}+\alpha^{n_4-n_1}\right)^{-1},
$$
and we can also take $b_1:=a,\quad b_2:=-n$ and $b_3:=1.$ Since $\gamma_1, \gamma_2, \gamma_3\in \mathbb{K}:=\mathbb{Q}(\sqrt{5}),$ we can take $D:=2$. The left hand side of (\ref{eq:13}) is not zero. The proof is done by contradiction. Suppose the contrary. Then
$$
2^a\sqrt{5}=\alpha^{n_1}+\alpha^{n_2}+\alpha^{n_3}+\alpha^{n_4}.
$$
Conjugating the above relation in the field $\mathbb{Q}(\sqrt{5})$, we get
$$
-2^a\sqrt{5}=\beta^{n_1}+\beta^{n_2}+\beta^{n_3}+\beta^{n_4}.
$$
Combining the above two equations, we get
$$
\alpha^{n_1}< \alpha^{n_1}+\alpha^{n_2}+\alpha^{n_3}+\alpha^{n_4}=|\beta^{n_1}+\beta^{n_2}+\beta^{n_3}+\beta^{n_4}|\leqslant |\beta|^{n_1}+|\beta|^{n_2}+|\beta|^{n_3}+|\beta|^{n_4}< 1,
$$
and leads to contradiction since $n_1>400.$

\noindent As done before, here, we can take $A_1:=1.4, A_2:=0.5$ and $B:=n_1$. Let us estimate $h(\gamma_3)$.
We can see that, 
$$
\gamma_3=\dfrac{\sqrt{5}}{1+\alpha^{n_2-n_1}+\alpha^{n_3-n_1}+\alpha^{n_4-n_1}}<\sqrt{5}\quad \text{and}\quad \gamma_3^{-1}=\dfrac{1+\alpha^{n_2-n_1}+\alpha^{n_3-n_1}+\alpha^{n_4-n_1}}{\sqrt{5}}<\dfrac{4}{\sqrt{5}}.
$$
Hence $|\log \gamma_3|<1.$ Then 
\begin{align*}
h(\gamma_3)\leqslant &\log (4\sqrt{5})+|n_2-n_1|\left(\dfrac{\log\alpha}{2}\right)+|n_3-n_1|\left(\dfrac{\log\alpha}{2}\right)+|n_4-n_1|\left(\dfrac{\log\alpha}{2}\right)\\
=& \log(4\sqrt{5})+(n_1-n_2)\left(\dfrac{\log\alpha}{2}\right)+(n_1-n_3)\left(\dfrac{\log\alpha}{2}\right)+(n_1-n_4)\left(\dfrac{\log\alpha}{2}\right);
\end{align*}
so we can take
$$
A_3:= 5+(n_1-n_2)\log \alpha+(n_1-n_3)\log \alpha+ (n_1-n_4)\log \alpha.
$$
Then a lower bound on the left-hand side  of (\ref{eq:13}) is 
$$
\exp (-c_4\cdot(1+\log n_1)\cdot 1.4\cdot 0.5\cdot(5+(n_1-n_2)\log \alpha+(n_1-n_3)\log \alpha+ (n_1-n_4)\log \alpha)),
$$
where $c_4=1.4\cdot 30^6\cdot 3^{4.5}\cdot 2^2\cdot (1+\log 2)<9.7\times 10^{11}.$

\noindent So, inequality (\ref{eq:13}) yields 
\begin{equation}\label{eq:14}
(n_1-n_5)\log \alpha< 1.4\times 10^{12}\log n_1\cdot (5+(n_1-n_2)\log \alpha+(n_1-n_3)\log \alpha+ (n_1-n_4)\log \alpha).
\end{equation}
Using now (\ref{eq:4}), (\ref{eq:9}) and (\ref{eq:12}) in the right-most terms of the above inequality (\ref{eq:14}) and performing the respective calculation, we find that 
$$
(n_1-n_5)\log \alpha< 40.32\times 10^{48}\log^4n_1.
$$

\end{proof}         
\subsection{Upper bound for $n_1$}$\\~$

Let us now consider a fifth linear form in logarithms. Rerwriting (\ref{eq1}) once again by separating large terms and small terms, we get

$$
\dfrac{\alpha^{n_1}+\alpha^{n_2}+\alpha^{n_3}+\alpha^{n_4}+\alpha^{n_5}}{\sqrt{5}}-2^a=\dfrac{\beta^{n_1}+\beta^{n_2}+\beta^{n_3}+\beta^{n_4}+\beta^{n_5}}{\sqrt{5}}.
$$
Taking absolute values on both sides, we get
\begin{align*}
\left|\dfrac{\alpha^{n_1}}{\sqrt{5}}\left(1+\alpha^{n_2-n_1}+\alpha^{n_3-n_1}+\alpha^{n_4-n_1}+\alpha^{n_5-n_1}\right)-2^a\right|\leqslant & \dfrac{|\beta|^{n_1}+|\beta|^{n_2}+|\beta|^{n_3}+|\beta|^{n_4}+|\beta|^{n_5}}{\sqrt{5}}\\
< & \dfrac{6}{5}\quad \text{for all}\quad n_1>400,\quad \text{and}\quad n_2, n_3, n_4, n_5\geqslant 1.
\end{align*}

\noindent Dividing both sides of the above relation by the fist term of the right hand side of the previous equation, we get 
\begin{equation}\label{eq:15a}
|\Lambda_5|=\left|1-2^a\cdot\alpha^{-n_1}\cdot \sqrt{5}\left(1+\alpha^{n_2-n_1}+\alpha^{n_3-n_1}+\alpha^{n_4-n_1}+\alpha^{n_5-n_1}\right)^{-1}\right|<\dfrac{3}{\alpha^{n_1}},
\end{equation}
$$
\text{where}\quad \Lambda_5=1-2^a\cdot\alpha^{-n_1}\cdot \sqrt{5}\left(1+\alpha^{n_2-n_1}+\alpha^{n_3-n_1}+\alpha^{n_4-n_1}+\alpha^{n_5-n_1}\right)^{-1}.
$$
\noindent In the last application of Matveev's theorem, we have the following parameters
$$
\gamma_1:=2,\quad \gamma_2:=\alpha,\quad \gamma_3:=\sqrt{5}\left(1+\alpha^{n_2-n_1}+\alpha^{n_3-n_1}+\alpha^{n_4-n_1}+\alpha^{n_5-n_1}\right)^{-1},
$$
and we can also take $b_1:=a,\quad b_2:=-n$ and $b_3:=1.$ Since $\gamma_1, \gamma_2, \gamma_3\in \mathbb{K}:=\mathbb{Q}(\sqrt{5}),$ we can take $D:=2$. The left hand side of (\ref{eq:15a}) is not zero. The proof is done by contradiction. Suppose the contrary. Then
$$
2^a\sqrt{5}=\alpha^{n_1}+\alpha^{n_2}+\alpha^{n_3}+\alpha^{n_4}+\alpha^{n_5}.
$$
Conjugating the above relation in the field $\mathbb{Q}(\sqrt{5})$, we get
$$
-2^a\sqrt{5}=\beta^{n_1}+\beta^{n_2}+\beta^{n_3}+\beta^{n_4}+\beta^{n_5}.
$$
Combining the above two equations, we get
$$
\alpha^{n_1}< \alpha^{n_1}+\alpha^{n_2}+\alpha^{n_3}+\alpha^{n_4}=|\beta^{n_1}+\beta^{n_2}+\beta^{n_3}+\beta^{n_4}+\beta^{n_5}|\leqslant |\beta|^{n_1}+|\beta|^{n_2}+|\beta|^{n_3}+|\beta|^{n_4}+|\beta|^{n_5}< 1,
$$
and leads to contradiction since $n_1>400.$

\noindent As done before, here, we can take $A_1:=1.4, A_2:=0.5$ and $B:=n_1$. Let us estimate $h(\gamma_3)$.
We can see that, 
$$
\gamma_3=\dfrac{\sqrt{5}}{1+\alpha^{n_2-n_1}+\alpha^{n_3-n_1}+\alpha^{n_4-n_1}+\alpha^{n_5-n_1}}<\sqrt{5}
$$
and
$$
 \gamma_3^{-1}=\dfrac{1+\alpha^{n_2-n_1}+\alpha^{n_3-n_1}+\alpha^{n_4-n_1}+\alpha^{n_5-n_1}}{\sqrt{5}}<\sqrt{5}
$$
Hence $|\log \gamma_3|<1.$ Then 
\begin{align*}
h(\gamma_3)\leqslant &\log (5\sqrt{5})+|n_2-n_1|\left(\dfrac{\log\alpha}{2}\right)+|n_3-n_1|\left(\dfrac{\log\alpha}{2}\right)+|n_4-n_1|\left(\dfrac{\log\alpha}{2}\right)+|n_5-n_1|\left(\dfrac{\log\alpha}{2}\right)\\
=& \log(5\sqrt{5})+(n_1-n_2)\left(\dfrac{\log\alpha}{2}\right)+(n_1-n_3)\left(\dfrac{\log\alpha}{2}\right)+(n_1-n_4)\left(\dfrac{\log\alpha}{2}\right)+(n_1-n_5)\left(\dfrac{\log\alpha}{2}\right);
\end{align*}
so we can take
$$
A_3:= 5+(n_1-n_2)\log \alpha+(n_1-n_3)\log \alpha+ (n_1-n_4)\log \alpha+(n_1-n_5)\log \alpha.
$$
Then a lower bound on the left-hand side  of (\ref{eq:15a}) is 
$$
\exp (-c_4\cdot(1+\log n_1)\cdot 1.4\cdot 0.5\cdot(5+(n_1-n_2)\log \alpha+(n_1-n_3)\log \alpha+ (n_1-n_4)\log \alpha)),
$$
where $c_4=1.4\cdot 30^6\cdot 3^{4.5}\cdot 2^2\cdot (1+\log 2)<9.7\times 10^{11}.$

\noindent So, inequality (\ref{eq:15a}) yields 
\begin{equation}\label{eq:16a}
n_1\log \alpha< 1.4\times 10^{12}\log n_1\cdot (5+(n_1-n_2)\log \alpha+(n_1-n_3)\log \alpha+ (n_1-n_4)\log \alpha+(n_1-n_5)\log \alpha).
\end{equation}
Using now (\ref{eq:4}), (\ref{eq:9}) and (\ref{eq:12}) in the right-most terms of the above inequality (\ref{eq:16a}) and performing the respective calculation, and with the help of \textit{Mathematica},  we find that 
$$
 n_1 < 1.6\times 10^{73}.
$$

We record what we have proved.
\begin{lemma}
If $(n_1, n_2, n_3, n_4, n_5, a)$ is a positive solution of (\ref{eq1}) with $n_1\geqslant n_2\geqslant n_3\geqslant n_4\geqslant n_5$, then 
$$
a\leqslant  n_1 < 1.6\times 10^{73}.
$$
\end{lemma}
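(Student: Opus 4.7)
The plan is to record the outcome of the fifth Matveev application already carried out in the preceding subsection and combine it with the elementary bound $a \leq n_1$ from the opening of the proof. That bound is immediate: the computation at the start shows $2^a = \sum_{k=1}^5 F_{n_k} < 2^{n_1+1}$, hence $a \leq n_1$, so only the upper bound on $n_1$ requires real work.

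For the bound on $n_1$, the strategy is to isolate all five large terms $\alpha^{n_k}/\sqrt{5}$ of the Binet expansion on one side, leaving only the five tiny conjugate terms $\beta^{n_k}/\sqrt{5}$ on the other. This produces the linear form $\Lambda_5$ of (\ref{eq:15a}) with the clean upper bound $|\Lambda_5| < 3/\alpha^{n_1}$. A Galois-conjugation argument in $\mathbb{Q}(\sqrt{5})$ rules out $\Lambda_5 = 0$: otherwise $\alpha^{n_1} < |\beta|^{n_1} + \cdots + |\beta|^{n_5} < 1$, contradicting $n_1 > 400$.

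Next I would apply Matveev's theorem with $\gamma_1 := 2$, $\gamma_2 := \alpha$, and $\gamma_3 := \sqrt{5}\bigl(1 + \sum_{k=2}^5 \alpha^{n_k - n_1}\bigr)^{-1}$, in the field $\mathbb{K} = \mathbb{Q}(\sqrt{5})$ of degree $D=2$. The height of $\gamma_3$ is controlled, via the standard inequalities $h(x \pm y) \leq h(x) + h(y) + \log 2$ and $h(\gamma^k) = |k|\, h(\gamma)$, by $\log(5\sqrt{5}) + \tfrac{1}{2} \sum_{k=2}^5 (n_1 - n_k) \log\alpha$, giving the admissible choice $A_3 := 5 + \sum_{k=2}^5 (n_1 - n_k)\log\alpha$. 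Matveev's lower bound on $\log|\Lambda_5|$, combined with (\ref{eq:15a}), then produces inequality (\ref{eq:16a}).

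The main obstacle is purely computational: feed the four earlier bounds on $(n_1 - n_k)\log\alpha$ coming from (\ref{eq:4}), (\ref{eq:9}), (\ref{eq:12}), and the $(n_1 - n_5)$-bound of the previous lemma into the right-hand side of (\ref{eq:16a}). The dominant contribution is the $(n_1 - n_5)$ term of order $\log^4 n_1$, so the right-hand side grows like a constant multiple of $\log^5 n_1$, and the resulting transcendental inequality of shape $n_1 \ll \log^5 n_1$ is solved numerically in \textit{Mathematica} to yield $n_1 < 1.6 \times 10^{73}$, completing the lemma.
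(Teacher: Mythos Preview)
Your proposal is correct and follows essentially the same route as the paper: form the fifth linear form $\Lambda_5$ of (\ref{eq:15a}), rule out $\Lambda_5=0$ by conjugation in $\mathbb{Q}(\sqrt{5})$, apply Matveev with the same $\gamma_1,\gamma_2,\gamma_3$ and height estimate for $\gamma_3$, obtain (\ref{eq:16a}), and then substitute the earlier gap bounds to solve numerically for $n_1$. You are in fact slightly more explicit than the paper in noting that the $(n_1-n_5)$ bound from the preceding lemma must also be fed into (\ref{eq:16a}); the paper's text cites only (\ref{eq:4}), (\ref{eq:9}), (\ref{eq:12}) at that step, but all four are needed.
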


\section{Reduction the bound on $n$}
The goal of this section is to reduce the upper bound on $n$ to a size that can be handled. To do this, we shall use Lemma \ref{lem1} five times. Let us consider 
\begin{equation}\label{eq:15}
z_1:=a\log 2-n_1\log \alpha+ \log \sqrt{5}.
\end{equation}
From equation (\ref{eq:15}), (\ref{eq:2}) can be written as
\begin{equation}\label{eq:16}
\left|1-e^{z_1}\right|< \dfrac{11.5}{\alpha^{n_1-n_2}}.
\end{equation}
Associating (\ref{eq1}) and Binet's formula for the Fibonacci sequence, we have
$$
\dfrac{\alpha^{n_1}}{\sqrt{5}}=F_{n_1}+\dfrac{\beta^{n_1}}{\sqrt{5}}<   \displaystyle \sum _{k=1}^{5}F_{n_k}=2^a,
$$
hence 
$$
\dfrac{\alpha^{n_1}}{\sqrt{5}}< 2^a,
$$

\noindent which leads to $z_1>0.$ This result together with (\ref{eq:16}), gives
$$
0< z_1< e^{z_1}-1< \dfrac{11.5}{\alpha^{n_1-n_2}}.
$$
Replacing (\ref{eq:15}) in the inequality and dividing both sides of the resulting inequality by $\log \alpha$, we get
\begin{equation}\label{eq:17}
0< a \left(\dfrac{\log 2}{\log \alpha}\right)-n_1+ \left(\dfrac{\log \sqrt{5}}{\log \alpha}\right)<\dfrac{11.5}{\log \alpha}\cdot \alpha^{-(n_1-n_2)}< 24\cdot \alpha^{-(n_1-n_2)}.
\end{equation} 

\noindent We put 
$$
\tau:=\dfrac{\log 2}{\log \alpha}, \quad \mu:=\dfrac{\log \sqrt{5}}{\log \alpha},\quad A:=24,\quad \text{and}\quad B:=\alpha.
$$
$\tau$ is an irrational number. We also put $M:= 1.6\times 10^{73}$, which is an upper bound on $a$ by Lemma \ref{lem1} applied to inequality, that 
$$
n_1-n_2< \dfrac{\log(Aq/\varepsilon)}{\log B},
$$
where $q> 6M$ is a denominator of a convergent of the continued fraction of $\tau$ such that $\varepsilon:=\norm{\mu q}-M\norm{\tau q}>0.$ A computation with \textit{SageMath} revealed that if  $(n_1, n_2, n_3, n_4, n_5, a)$ is a possible solution of the equation (\ref{eq1}), then 

$$
n_1-n_2\in [0, 367].
$$

\noindent Let us now consider a second function, derived from (\ref{eq:5}) in order to find an improved upper bound on $n_1-n_2$.

\noindent Put 
$$
z_2:=a\log 2-n_1\log \alpha+ \log \Upsilon(n_1-n_2)
$$
where $\Upsilon$ is the function given by the formula $\Upsilon(t):=\sqrt{5}\left(1+\alpha^{-t}\right)^{-1}.$ From (\ref{eq:5}), we have 
\begin{equation}\label{eq:18}
\left| 1-e^{z_2}\right|< \dfrac{8}{\alpha^{n_1-n_3}}.
\end{equation}
Using (\ref{eq1}) and the Binet's formula for the Fibonacci sequence, we have
$$
\dfrac{\alpha^{n_1}}{\sqrt{5}}+\dfrac{\alpha^{n_2}}{\sqrt{5}}= F_{n_1}+F_{n_2}+\dfrac{\beta^{n_1}}{\sqrt{5}}+\dfrac{\beta^{n_2}}{\sqrt{5}}<F_{n_1}+F_{n_2}+1\leqslant F_{n_{1}}+F_{n_{2}}+F_{n_{3}}+F_{n_{4}}=2^a.
$$
Therefore $1< 2^a\alpha^{-n_1}\sqrt{5}\left(1+\alpha^{n_2-n_1}\right)^{-1}$ and so $z_2>0$. This with (\ref{eq:18}) give
$$
0<z_2\leqslant e^{z_2}-1< \dfrac{8}{\alpha^{n_1-n_3}}.
$$
Putting expression of $z_2$ in the above inequality and arguing as in (\ref{eq:17}), we obtain
\begin{equation}\label{eq:19}
0<a\left(\dfrac{\log 2}{\log \alpha}\right)-n_1+\dfrac{\log\Upsilon(n_1-n_2)}{\log \alpha}< 17\cdot \alpha^{-(n_1-n_3)}.
\end{equation}
As done before, we take again $M:=1.6\times 10^{73}$ which is the upper bound on $a$, and, as explained before, we apply Lemma \ref{lem1} to inequality (\ref{eq:19}) for all choices $n_1-n_2\in [0,367]$ except when $n_1-n_2=2,6$. With the help of \textit{SageMath}, we find that if $(n_1, n_2, n_3, n_4, n_5, a)$ is a possible solution of the equation (\ref{eq1}) with $n_1-n_2\neq 2$ and $n_1-n_2\neq 6$, then $n_1-n_3\in [0, 367]$.

\noindent Study of the cases $n_1-n_2\in \{2,6\}$. For these cases, when we apply Lemma \ref{lem1} to the expression (\ref{eq:19}), the corresponding parameter $\mu$ appearing in Lemma \ref{lem1} is 
$$
\dfrac{\log \Upsilon(t)}{\log\alpha}=
\left\{
    \begin{array}{lllll}
        1 & \text{if} & t=2;\\
        3-\dfrac{\log2}{\log\alpha} & \text{if} & t=6.
    \end{array}
\right.
$$
In both case, the parameters $\tau$ and $\mu$ are linearly dependent, which yield that the corresponding value of $\varepsilon$ from Lemma \ref{lem1} is always negative and therefore the reduction method is not useful  for reducing the bound on $n$ in these instances. For this, we need to treat these cases differently. 

\noindent However, we can see that if $t=2$ and $6$, then resulting inequality from (\ref{eq:19}) has the shape $0<|x\tau-y|<17\cdot \alpha^{-(n_1-n_3)}$ with $\tau$ being irrational number and $x,y\in \mathbb{Z}$. Then, using the known proprieties of the convergents of the continued fractions to obtain a nontrivial lower bound for $|x\tau-y|$. Let see how to do.
 
\noindent For $n_1-n_2=2$, from (\ref{eq:19}), we get that
\begin{equation}\label{eq:20}
0<a\tau -(n_1-1)< 17\cdot \alpha^{-(n_1-n_3)}, \quad \text{where}\quad \tau=\dfrac{\log 2}{\log \alpha}.
\end{equation}
Let $[a_1, a_2, a_3, a_4,\ldots]=[1,2,3,1,\ldots]$ be the continued fraction of $\tau$, and let denote $p_k/q_k$ it $k$th convergent. By Lemma \ref{lem2}, we know that $a<1.6\times 10^{73}$. An inspection in \textit{SageMath} reveals that 
$$
15177625911361796815938210846220393654109270183406839528998805820407221653
=q_{147}
$$
$$
<1.6\times 10^{73}<
$$
$$
q_{148}=75583009274523299909961213530369339183941874844471761873846700783141852920.
$$
Furthermore, $a_M:=\max \{a_i: i=0,1,\ldots,114\}=134.$ So, from the proprieties of continued fractions, we obtain that 
\begin{equation}\label{eq:21}
|a\tau-(n_1-1)|> \dfrac{1}{(a_M+2)a}.
\end{equation} 
Comparing (\ref{eq:20}) and (\ref{eq:21}), we get
$$
\alpha^{n_1-n_3}<17\cdot (134+2)a.
$$
Taking $\log$ on both sides of above equation and divide the obtained result by $\log\alpha$, we get 
$$
n_1-n_3<367.
$$

\noindent In order to avoid repetition, we freely omit the details for the case $n_1-n_2=6.$ Here, we get again $n_1-n_3<367$.

\noindent This completes the analysis of the two special cases $n_1-n_2=2$ and $n_1-n_2=6.$ Consequently $n_1-n_3\leqslant 367$ always holds.

\noindent Now let us use (\ref{eq:10}) in order to find improved upper bound on $n_1-n_4.$ Put 
$$
z_3:= a\log 2-n_1\log\alpha+\log\Upsilon_1(n_1-n_2, n_1-n_3),
$$ 
where $\Upsilon$ is the function given by the formula $\Upsilon_1(t,s):=\sqrt{5}\left(1+\alpha^{-t}+\alpha^{-s}\right)^{-1}.$ From (\ref{eq:10}), we have
\begin{equation}\label{eq:22}
|1-e^{z_3}|<\dfrac{4}{\alpha^{n_1-n_4}}.
\end{equation}

\noindent Note that, $z_3\neq 0$; thus, two cases arise: $z_3>0$ and $z_3< 0$.

\noindent If $z_3>0,$ then
$$
0<z_3\leqslant e^{z_3}-1<\dfrac{4}{\alpha^{n_1-n_4}}.
$$
Suppose now $z_3< 0.$ It is easy to check that $4/\alpha^{n_1-n_4}<1/2$ for all $n_1>400$ and $n_4\geqslant 2$. From (\ref{eq:22}), we have that 
$$
|1-e^{z_3}|<1/2 \quad \text{and therefore}\quad e^{|z_3|}<2.
$$
Since $z_3<0$, we have:
$$
0<|z_3|\leqslant e^{|z_3|}-1=e^{|z_3|}\left|e^{|z_3|}-1\right|< \dfrac{8}{\alpha^{n_1-n_4}}
$$
which gives 
$$
0<|z_3|<\dfrac{8}{\alpha^{n_1-n_4}}
$$
holds for $z_3<0$,  $z_3>0$ and for all for all $n_1>400$, and $n_4\geqslant 2$.
Replacing the expression of $z_3$ in the above inequality and arguing again as before, we conclude that
\begin{equation}\label{eq:23}
0< \left|a\left(\dfrac{\log 2}{\log \alpha}\right)-n_1+ \dfrac{\log \Upsilon_1(n_1-n_2, n_1-n_3)}{\log \alpha}\right|<17\cdot \alpha^{-(n_1-n_4)}.
\end{equation}

\noindent Here, we also take, $M:=1.6\times 10^{73}$ and we apply Lemma \ref{lem1} in inequality (\ref{eq:23}) for all choices $n_1-n_2\in \{0,367\}$ and $n_1-n_3\in \{0, 367\}$ except when 
$$
(n_1-n_2,n_1-n_3)\in \{(0,3), (1,1), (1,5), (3,0), (3,4), (4,3), (5,1), (7,8), (8,7)\}.
$$
Indeed, with the help of \textit{SageMath} we find that if $(n_1, n_2, n_3, n_4, n_5, a)$ is a possible solution of the equation (\ref{eq1}) excluding these cases presented before. Then $n_1-n_4\leqslant 367.$

\noindent \underline{SPECIAL CASES}. We deal with the cases when 
$$
(n_1-n_2,n_1-n_3)\in \{(1,1), (3,0), (4,3), (5,1), (8,7)\}.
$$
It is easy to check that

$$
\dfrac{\log \Upsilon_1(t,s)}{\log\alpha}=
\left\{
    \begin{array}{lllll}
        0, & \text{if} & (t,s)=(1,1);\\
        0, & \text{if} & (t,s)=(3,0);\\
        1, & \text{if} & (t,s)=(4,3);\\
        2-\dfrac{\log 2}{\log \alpha}, & \text{if} & (t,s)=(5,1);\\
        3-\dfrac{\log 2}{\log \alpha}, & \text{if} & (t,s)=(8,7).
    \end{array}
\right.
$$
As we explained before, when we apply Lemma \ref{lem1} to the expression (\ref{eq:23}), the parameters $\tau$ and $\mu$ are linearly dependent, so the corresponding value of $\varepsilon$ from Lemma \ref{lem1} is always negative in all cases. For this reason, we shall treat these cases differently.

\noindent Here, we have to solve the equations
$$
F_{n_2+1}+2F_{n_2}+F_{n_4}+F_{n_5}=2^a,\quad  2F_{n_2+3}+F_{n_2}+F_{n_4}+F_{n_5}=2^a,\quad F_{n_2+4}+F_{n_2}+F_{n_2+1}+F_{n_4}+F_{n_5}=2^a,
$$
\begin{equation}\label{eq:24}
 F_{n_2+5}+F_{n_2}+F_{n_2+4}+F_{n_4}+F_{n_5}=2^a,\quad \text{and}\quad F_{n_2+8}+F_{n_2}+F_{n_2+1}+F_{n_4}+F_{n_5}=2^a
\end{equation}
in positive integers $n_2, n_4, n_5$ and $a$. To do so, we recall the following well-known relation between the Fibonacci and the Lucas numbers:
\begin{equation}\label{eq:25}
L_k=F_{k-1}+F_{k+1} \quad \text{for all} \quad k\geqslant 1.
\end{equation}
From (\ref{eq:25}) and (\ref{eq:24}), we have the following identities 

$$
F_{n_2+1}+2F_{n_2}+F_{n_4}+F_{n_5}=F_{n_2+2}+F_{n_2}+F_{n_4}+F_{n_5}=F_{k+2}+F_{k}+F_{m}+F_{w}, 
$$
$$
2F_{n_2+3}+F_{n_2}+F_{n_4}+F_{n_5}=F_{n_2+2}+F_{n_2+4}+F_{n_4}+F_{n_5}=F_{k+2}+F_{k+4}+F_{m}+F_{w},
$$
\begin{equation}\label{eq:26}
 F_{n_2+4}+F_{n_2}+F_{n_2+1}+F_{n_4}+F_{n_5}=F_{n_2+2}+F_{n_2+4}+F_{n_4}+F_{n_5}=F_{k+2}+F_{k+4}+F_{m}+F_{w}, 
\end{equation}
$$
F_{n_2+5}+F_{n_2}+F_{n_2+4}+F_{n_4}+F_{n_5}=2F_{n_2+2}+2F_{n_2+4}+F_{n_4}+F_{n_5}=2F_{k+2}+2F_{k+4}+F_{m}+F_{w},
$$
$$
\text{and}\quad F_{n_2+8}+F_{n_2}+F_{n_2+1}+F_{n_4}+F_{n_5}= 2F_{n_2+6}+2F_{n_2+4}+F_{n_4}+F_{n_5}=2F_{k+6}+2F_{k+4}+F_{m}+F_{w},
$$
hold for all $k, m, w\geqslant 0$.

\noindent Equations (\ref{eq:24}) are transformed into the equations 
\begin{equation}\label{eq:27}
L_{k+1}+F_m+F_{w}=2^a,\quad L_{k+3}+F_{m}+F_{w}=2^a,\quad 2L_{k+3}+F_m+F_{w}=2^a,\quad 2L_{k+5}+F_m+F_{w}=2^a,
\end{equation}
to be resolved in positive integers $k, m, w$ and $a$.

\noindent A quick search in SageMath and analytical resolution leads to : 

$$
(k,m,w,a)\in \{(4, 4, 3, 4), (6, 3, 1, 5), (6, 3, 2, 5), (9, 4, 3, 7), (10, 10, 3, 8)\}\quad \text{for}\quad L_{k+1}+F_m+F_{w}=2^a,
$$

$$
(k,m,w,a)\in \{(4, 3, 1, 5), (4, 3, 2, 5), (7, 4, 3, 4)\} \quad \text{for}\quad L_{k+3}+F_{m}+F_{w}=2^a,
$$

$$
(k,m,w,a)\in \{(1, 1, 1, 4), (4,4,4,6), (7, 5, 5, 8),(7, 6, 3, 8)\} \quad \text{for}\quad 2L_{k+3}+F_m+F_{w}=2^a,
$$

$$
(k,m,w,a)=(5, 5, 5, 8) \quad \text{for}\quad 2L_{k+5}+F_m+F_{w}=2^a.
$$

\noindent We will nonetheless use $(t, s) \in \{(5,1),(8,7)\}$ for the purpose of showing that  Legendre's criterion is applicable when linear dependence arises in any subsequent cases. Consider
$$
\dfrac{\log \Upsilon_1(5, 1)}{\log \alpha}=2-\dfrac{\log 2}{\log \alpha}.
$$
We use the above expression to obtain the inequality
$$
0<\left|(a-1)\left(\dfrac{\log 2}{\log \alpha}\right)-(n_1- 2)\right|<17\cdot \alpha^{-(n_1-n_4)},
$$

\noindent to which Legendre's criterion may be applied.

\noindent Likewise when we consider 
$$
\dfrac{\log \Upsilon_1(8, 7)}{\log \alpha}=3-\dfrac{\log 2}{\log \alpha},
$$
and we use the above expression to obtain the inequality
$$
0< \left|(a-1)\left(\dfrac{\log 2}{\log \alpha}\right)-(n_1- 3)\right|<17\cdot \alpha^{-(n_1-n_4)},
$$
to which Legendre's criterion may be applied again. In both cases, $n_1-n_4< 367.$

\noindent This completes the analysis of special cases.

\noindent Now, let us use (\ref{eq:13}) in order to find improved upper bound on $n_1-n_5.$ Put
$$
z_4:=a\log 2-n_1\log \alpha+ \log\Upsilon_2(n_1-n_2, n_1-n_3, n_1-n_4),
$$ 
where $\Upsilon_2$ is the function given by the formula 
$$
\Upsilon_2(t,u,v):= \sqrt{5}\left( 1+ \alpha^{-t}+\alpha^{-u}+\alpha^{-v}\right)^{-1}
$$
with $t=n_1-n_2, u=n_1-n_3$ and $v=n_1-n_4.$ From (\ref{eq:13}), we get 
\begin{equation}\label{eq:28}
|1-e^{z_4}|<\dfrac{2}{\alpha^{n_1-n_5}}.
\end{equation}

\noindent Since $z_4\neq 0$, as before, two cases arise: $z_4< 0$ and $z_4> 0$.

\noindent If $z_4>0$, then 
$$
0<z_4\leqslant e^{z_4}-1<\dfrac{2}{\alpha^{n_1-n_5}}.
$$

\noindent Suppose now that $z_4<0$. We have $2/\alpha^{n_1-n_5}<1/2$ for all $n_1>400$ and $n_5\geqslant 5$. 

\noindent Then, from (\ref{eq:28}), we have 
$$
|1-e^{z_4}|<\dfrac{1}{2}
$$
and therefore $e^{|z_4|}<2$.

\noindent Since $z_4<0$, we have : 
$$
0<|z_4|\leqslant e^{|z_4|}-1=e^{|z_4|}\left|e^{|z_4|}-1\right|< \dfrac{4}{\alpha^{n_1-n_5}}
$$
which gives 
$$
0<|z_4|<\dfrac{4}{\alpha^{n_1-n_5}}
$$
for the both cases ($z_4< 0$ and $z_4>0$ ) and holds for all $n_1>400.$

\noindent Replacing the expression of $z_4$ in the above inequality and arguing again as before, we conclude that
\begin{equation}\label{eq:29}
0< \left|a\left(\dfrac{\log 2}{\log \alpha}\right)-n_1+ \dfrac{\log \Upsilon_2(n_1-n_2, n_1-n_3, n_1-n_4)}{\log \alpha}\right|<9\cdot \alpha^{-(n_1-n_5)}.
\end{equation}

\noindent Here, we also take, $M:=1.6\times 10^{73}$ and we apply Lemma \ref{lem1} one more time in inequality (\ref{eq:29}) for all choices $n_1-n_2\in \{0,367\}$, $n_1-n_3\in \{0, 367\}$ and $n_1-n_4 \in \{0,367\}$ with $(n_1, n_2, n_3, n_4, n_5, a)$ a possible solution of equation (\ref{eq1}), and by omitting the  study of special cases (because it gives a solution presented in Theorem \ref{theo4} ), we get: 

$$n_1-n_5< 369.$$

\noindent Finally let us use (\ref{eq:15a}) in order to find improved upper bound on $n_1.$ Put
$$
z_5:=a\log 2-n_1\log \alpha+ \log\Upsilon_3(n_1-n_2, n_1-n_3, n_1-n_4, n_1-n-n_5),
$$ 
where $\Upsilon_3$ is the function given by the formula 
$$
\Upsilon_3(t,u,v,y):= \sqrt{5}\left( 1+ \alpha^{-t}+\alpha^{-u}+\alpha^{-v}+\alpha^{-y}\right)^{-1}
$$
with $t=n_1-n_2, u=n_1-n_3$, $v=n_1-n_4$ and $y=n_1-n_5.$

\noindent From (\ref{eq:15a}), we get 
\begin{equation}\label{eq:30}
|1-e^{z_5}|<\dfrac{3}{\alpha^{n_1}}.
\end{equation}

\noindent Since $z_5\neq 0$, as before, two cases arise: $z_5< 0$ and $z_5> 0$.

\noindent If $z_5>0$, then 
$$
0<z_5\leqslant e^{z_5}-1<\dfrac{3}{\alpha^{n_1}}.
$$

\noindent Suppose now that $z_5<0$. We have $3/\alpha^{n_1}<1/2$ for all $n_1>400$. Then, from (\ref{eq:30}), we have 
$$
|1-e^{z_5}|<\dfrac{1}{2}
$$
and therefore $e^{|z_5|}<2$.

\noindent Since $z_5<0$, we have : 
$$
0<|z_5|\leqslant e^{|z_5|}-1=e^{|z_5|}\left|e^{|z_5|}-1\right|< \dfrac{6}{\alpha^{n_1}}
$$
which gives 
$$
0<|z_5|<\dfrac{6}{\alpha^{n_1}}
$$
for the both cases ($z_5< 0$ and $z_5>0$ ) and holds for all $n_1>400.$

\noindent Replacing the expression of $z_5$ in the above inequality and arguing again as before, we conclude that
\begin{equation}\label{eq:31}
0< \left|a\left(\dfrac{\log 2}{\log \alpha}\right)-n_1+ \dfrac{\log \Upsilon_2(n_1-n_2, n_1-n_3, n_1-n_4,n_1-n_5)}{\log \alpha}\right|<13\cdot \alpha^{-n_1}.
\end{equation}

\noindent Here, we also take, $M:=1.6\times 10^{73}$ and we apply Lemma \ref{lem1} last time in inequality (\ref{eq:31}) for all choices $n_1-n_2\in \{0,367\}$, $n_1-n_3\in \{0, 367\}$, $n_1-n_4 \in \{0,367\}$ and $n_1-n_5 \in \{0,369\}$ with $(n_1, n_2, n_3, n_4, n_5, a)$ a possible solution of equation (\ref{eq1}), and by omitting the study of special cases (because it gives a solution presented in Theorem \ref{theo4}, we get: 

$$n_1< 369.$$
This is false because this contradicts our initial hypothesis $n_1 > 400$.

\noindent This ends the proof of our main theorem.

\end{proof}

\newpage
\section{Table}\label{5}
In this table, we have the complete list of all non-trivial solutions of the Diophantine equation (\ref{eq1}). We have one hundred and six non-trival solutions.
\begin{tiny}
\begin{multicols}{3}
\begin{center}
    \begin{tabular}{|c|c|c|c|c|c|}
\hline
Numbering & $n_5$&$n_4$&$n_3$&$n_2$&$n_1$\\ \hline 
1&1& 1& 1& 3& 4\\ \hline 
2& 1& 1& 1& 5& 6\\ \hline 
3& 1& 1& 1& 6& 8 \\ \hline
4& 1& 1& 1& 9& 16\\ \hline
5& 1& 1& 2& 3& 4\\ \hline
6& 1& 1& 2& 5& 6 \\ \hline
7& 1& 1& 2& 6& 8\\ \hline
8& 1& 1& 2& 9& 16\\ \hline
9& 1& 1& 3& 3& 3\\ \hline
10& 1& 1& 3& 5& 10\\ \hline 
11& 1& 1& 4& 4& 6 \\ \hline
12& 1& 1& 4& 9& 11 \\ \hline
13& 1& 1& 6& 7& 13 \\ \hline
14& 1& 1& 8& 11& 12 \\ \hline
15& 1& 1& 10& 10& 12\\ \hline
16& 1& 2& 2& 3& 4 \\ \hline
17& 1& 2& 2& 5& 6 \\ \hline
18& 1& 2& 2& 6& 8\\ \hline
19& 1& 2& 2& 9& 16 \\ \hline
20& 1& 2& 3& 3& 3 \\ \hline
21& 1& 2& 3& 5& 10 \\ \hline
22& 1& 2& 4& 4& 6\\ \hline
23& 1& 2& 4& 9& 11 \\ \hline
24& 1& 2& 6& 7& 13 \\ \hline
25& 1& 2& 8& 11& 12 \\ \hline
26& 1& 2& 10& 10& 12 \\ \hline
27& 1& 3& 3& 4& 6 \\ \hline
28& 1& 3& 3& 9& 11 \\ \hline
29& 1& 3& 4& 4& 10 \\ \hline
30& 1& 3& 4& 5& 5 \\ \hline
31& 1& 3& 4& 5& 8 \\ \hline
32& 1& 3& 4& 7& 7 \\ \hline
33& 1& 3& 6& 6& 7 \\ \hline
34& 1& 3& 7& 8& 16 \\ \hline
35& 1& 3& 9& 14& 15 \\ \hline
 \end{tabular}
\end{center} 

\columnbreak
\begin{center}
    \begin{tabular}{|c|c|c|c|c|c|}
\hline
Numbering & $n_5$&$n_4$&$n_3$&$n_2$&$n_1$\\ \hline 
36& 1& 4& 5& 8& 9\\ \hline
37& 1& 4& 7& 7& 9\\ \hline
38& 1& 5& 5& 6& 7\\ \hline
39& 1& 6& 6& 7& 9\\ \hline
40& 1& 6& 7& 8& 8\\ \hline
41& 2& 2& 2& 3& 4\\ \hline
42& 2& 2& 2& 5& 6\\ \hline
43& 2& 2& 2& 6& 8\\ \hline
44& 2& 2& 2& 9& 16\\ \hline
45& 2& 2& 3& 3& 3\\ \hline
46& 2& 2& 3& 5& 10\\ \hline
47& 2& 2& 4& 4& 6\\ \hline
48& 2& 2& 4& 9& 11\\ \hline
49& 2& 2& 6& 7& 13\\ \hline
50& 2& 2& 8& 11& 12\\ \hline
51& 2& 2& 10& 10& 12\\ \hline
52& 2& 3& 3& 4& 6\\ \hline
53& 2& 3& 3& 9& 11\\ \hline
54& 2& 3& 4& 4& 10\\ \hline
55& 2& 3& 4& 5& 5\\ \hline
56& 2& 3& 4& 5& 8\\ \hline
57& 2& 3& 4& 7& 7\\ \hline
58& 2& 3& 6& 6& 7\\ \hline
59& 2& 3& 7& 8& 16\\ \hline
60& 2& 3& 9& 14& 15\\ \hline
61& 2& 4& 5& 8& 9\\ \hline
62& 2& 4& 7& 7& 9\\ \hline
63& 2& 5& 5& 6& 7\\ \hline
64& 2& 6& 6& 7& 9\\ \hline
65& 2& 6& 7& 8& 8\\ \hline
66& 3& 3& 3& 3& 6\\ \hline
67& 3& 3& 3& 4& 10\\ \hline
68& 3& 3& 3& 5& 5\\ \hline
69& 3& 3& 3& 5& 8\\ \hline
70& 3& 3& 3& 7& 7\\ \hline

 \end{tabular}
\end{center} 

\columnbreak
\begin{center}
\begin{tabular}{|c|c|c|c|c|c|}
    \hline
Numbering & $n_5$&$n_4$&$n_3$&$n_2$&$n_1$\\ \hline
71& 3& 3& 5& 8& 9\\ \hline
72& 3& 3& 7& 7& 9\\ \hline
73& 3& 4& 4& 4& 5\\ \hline
74& 3& 4& 4& 4& 8\\ \hline
75& 3& 4& 5& 7& 13\\ \hline
76& 3& 4& 7& 8& 11\\ \hline
77& 3& 4& 7& 10& 10\\ \hline
78& 3& 4& 9& 9& 10\\ \hline
79& 3& 5& 6& 6& 13\\ \hline
80& 3& 6& 6& 8& 11\\ \hline
81& 3& 6& 6& 10& 10\\ \hline
82& 3& 6& 7& 11& 12\\ \hline
83& 3& 8& 9& 10& 12\\ \hline
84& 3& 8& 10& 11& 11\\ \hline
85& 3& 10& 10& 10& 11\\ \hline
86& 4& 4& 4& 8& 9\\ \hline
87& 4& 4& 5& 6& 7\\ \hline
88& 4& 5& 6& 6& 6\\ \hline
89& 4& 5& 6& 8& 16\\ \hline
90& 4& 6& 7& 7& 16\\ \hline
91& 4& 7& 8& 14& 15\\ \hline
92& 4& 9& 12& 13& 15\\ \hline
93& 4& 9& 13& 14& 14\\ \hline
94& 4& 11& 11& 13& 15\\ \hline
95& 5& 5& 5& 6& 13\\ \hline
96& 5& 5& 6& 8& 11\\ \hline
97& 5& 5& 6& 10& 10\\ \hline
98& 5& 5& 7& 11& 12\\ \hline
99& 5& 6& 7& 7& 11\\ \hline
100& 5& 7& 8& 9& 10\\ \hline
101& 5& 8& 9& 9& 9\\ \hline
102& 6& 6& 6& 7& 16\\ \hline
103& 6& 6& 8& 14& 15\\ \hline
104& 6& 10& 14& 16& 20\\ \hline
105& 7& 7& 7& 9& 10\\ \hline
106& 7& 7& 9& 9& 9\\ \hline
    \end{tabular}

\end{center}
\end{multicols}
\end{tiny}

\section{Open problem}
The sequence $4, 9, 15, 60, 106,\cdots$  counting the number of solutions of the equation $\displaystyle \sum_{k=1}^N F_{n_k} = 2^a$ for $N = 1, 2, 3,\cdots$ does not appear in Sloan's OEIS. The problem is: can one provide a general theory for these equations for any $N$? We created a new entry in OEIS for the  sequence $4, 9, 15, 60, 106,\cdots$. It can be found here {\color{blue}\url{https://oeis.org/A356928}}. 
\section*{Acknowledgments}

The authors would like to thank the referee for careful reading.

\vspace{1cm}
\textbf{Acknowledgment.} We thank the referee for a careful reading of the paper and for comments and suggestions, which improved its quality.

\pagebreak

\vspace{2cm}

 \end{document}